\newtheorem{theorem}{Theorem}
\newtheorem{example}{Example}
\def\hyph{-\penalty0\hskip0pt\relax}
\def\T{{\mathsf T}}
\def\y{{\boldsymbol y}}
\def\h{{\boldsymbol h}}
\def\m{{\boldsymbol m}}
\def\v{{\boldsymbol v}}
\def\w{{\boldsymbol w}}
\def\x{{\boldsymbol x}}
\def\g{{\boldsymbol g}}
\def\q{{\boldsymbol q}}
\def\z{{\boldsymbol z}}
\def\H{{\boldsymbol H}}
\def\bSigma{\boldsymbol{\Sigma}}
\def\E{\mathbb{E}}
\def\Tr{{\rm Tr}}
\def\bpsi{{\boldsymbol \psi}}
\def\bphi{{\boldsymbol \phi}}
\def\B{\mathcal{B}}
\def\F{\mathcal{F}}
\def\R{\mathcal{R}}
\definecolor{Gray}{gray}{0.75}
\definecolor{green}{rgb}{0,0.5,0}
\newcolumntype{g}{>{\columncolor{Gray}}c}
\newcounter{assumption}
\newenvironment{assumption}{%
\refstepcounter{assumption}%
\edef\@currentlabelname{{\bf{A}}\arabic{assumption}}%
\par\noindent{\bf{(Assumption \arabic{assumption})}}\itshape\
}{%
\hfill\qed\par\noindent%
\ignorespacesafterend
}
\newcounter{parentnumber}
\journal{Neurocomputing}
\begin{document}
\begin{frontmatter}
\title{On Distributed Online Classification in the Midst of Concept Drifts}

\author[ucla]{Zaid~J.~Towfic}
\ead{ztowfic@ee.ucla.edu}
\author[ucla]{Jianshu~Chen}
\ead{jshchen@ee.ucla.edu}
\author[ucla]{Ali~H.~Sayed\corref{cor1}}
\ead{sayed@ee.ucla.edu}

\address[ucla]{Electrical Engineering Department\\
University of California\\
Los Angeles, CA 90095, USA}
\cortext[cor1]{Corresponding author}


\begin{abstract}%
In this work, we analyze the generalization ability of distributed online learning algorithms under stationary and non-stationary environments. We derive bounds for the excess-risk attained by each node in a connected network of learners and study the performance advantage that diffusion strategies have over individual non-cooperative processing. We conduct extensive simulations to illustrate the results.
\end{abstract}

\begin{keyword}
distributed stochastic optimization, diffusion adaptation, non-stationary data, tracking, classification, risk function, loss function, excess risk. 
\end{keyword}
\end{frontmatter}
\allowdisplaybreaks
\section{Introduction}
Stochastic gradient algorithms provide powerful and iterative techniques for the solution of optimization problems \cite{Polyak}. In many situations of interest, the objective function is in the form of the expectation of a convex loss function over the distribution of the input data. Such situations arise in machine learning applications, where the input data are features to a classifier and their associated class labels. For example, the goal of a binary classifier is to predict the label ($\pm 1$) given a vector of features that describes an observation (or, equivalently, to separate two classes based on their feature vector descriptions). The classifier achieves this goal by learning a classification rule based on a cost function that penalizes incorrect classification according to some criterion. The cost function is usually referred to as the {\em{risk}} \cite[p. 20]{Vapnik}, and it measures the generalization error that is achieved by the classifier (that is, it measures how well a classifier is able to predict the labels associated with feature vectors that have not yet been observed). The \emph
{excess-risk} is defined as the difference between the risk achieved by the classifier given its classification rule and the smallest risk achievable by the classifier over all possible classification rules. It is  critical to study the excess-risk performance of a classifier in order to understand how the classifier will perform on future data compared to the best possible classifier.

Several works in the literature study excess-risk {\em indirectly} by deriving regret bounds and then relating these bounds to excess-risk \cite{OCO,Kakade}. This two-step procedure suffers from two drawbacks: 1) the procedure is targeted at algorithms that utilize diminishing step-sizes, which are not useful for non{\hyph}stationary environments, and 2) the second step that relates the regret to excess-risk is not tight, and it has been shown that online learning algorithms that utilize diminishing step-sizes can achieve better performance than dictated by the indirect analysis \cite{hazan2}. In this article, we study the excess-risk directly and for {\em constant} step-sizes in order to cope with non-stationary environments. Among other results, we establish that a constant step-size {\em distributed} algorithm of the diffusion type can achieve arbitrarily small excess-risk for appropriately chosen step-sizes in stationary environments.

Distributed stochastic learning seeks to leverage cooperation between nodes over a network in order to optimize the overall network risk without the need for coordination or supervision from a central entity that has access to the entire data (like features and labels) from all nodes. Such distributed sche{\hyph}mes are particularly useful when the data sampled by the nodes cannot be shared broadly due to privacy or communication constraints. In \cite{zinkevich_parallelized}, an algorithm is developed that requires a central node or server to poll the optimization estimates from all the nodes at the end of a time horizon. This approach is not fully distributed and is not able to track changes in the generating distribution without restarting the algorithm. One fully distributed learning algorithm appears in \cite{yan} where the global cost is chosen as the aggregate regret over the network of learners. The scheme of \cite{yan} consists of a single consensus-type iteration of the form \eqref{eq:consensus} further ahead and is similar to the schemes proposed in \cite{nedic2} for distributed optimization; the analysis in \cite{nedic2} is limited to the noise-free case. In the estimation literature, references \cite{cassio,federico,book_chapter} proposed distributed schemes that rely on diffusion rather than consensus iterations. Diffusion strategies allow for information to diffuse more readily through the network, and they enhance stability, convergence, and robustness in comparison to consensus strategies \cite{shine_diffusion_consensus}. Diffusion strategies consist of two steps: a combination step that averages the estimates in the local neighborhood of an agent, and an adaptation step that incorporates new information into the local estimator of each agent. The net result is that information is diffused across the network adaptively and in real-time. The diffusion approach was generalized in \cite{Jianshu_common_wo} for general strongly convex cost functions and constant step-sizes. 

In comparison to the earlier work on diffusion adaptation \cite{cassio,federico,Xiaochuan}, we study in this work  the excess-risk performance of general strongly-convex risk functions as opposed to mean-square error performance. This level of generality allows us to study the excess-risk performance for regularized logistic regression in addition to the delta rule (square loss). In comparison to \cite{Jianshu_common_wo,MLSP2}, we study the \emph{tracking} performance of the distributed classifiers when the optimizer is time-varying. The effective tracking of a drifting concept is only possible when the algorithm utilizes a constant step-size as opposed to diminishing step-sizes as used in \cite{yan,MLSP2}. Even for stationary environments, we show that the proposed algorithms obtain better performance than the non-cooperative solution for any strongly-convex risk function when some mild assumptions hold regarding the noise process.

One of the main objectives of this work is therefore to study the generalization ability of diffusion strategies  when the distribution from which the data arises is {\em{time-varying}}. When the statistics of the input to the classifier change, the classifier must adjust its classification rule in order to accurately classify the data arising from the new distribution --- see Fig.~\ref{fig:rotating_hyperplane} further ahead. In the context of machine learning, this change in the best possible classification rule for the non-stationary data is referred to as \emph{concept drift} \cite{WidmerKubat,angP2P,Jenn2010}. We desire to answer one key question: is it possible to obtain convergence results for the distributed learning algorithms to show tracking of a changing optimal classification rule? We discover that the answer is in the affirmative under some assumptions. 

\section{Problem Formulation and Algorithm}
It is assumed that a classifier receives samples $\x_i$ over time $i$ arising from some underlying statistical distribution. A loss function $Q(w,\x_i)$ is associated with $\x_i$ and depends on an $M\times 1$ parameter vector $w$. The classifier wishes to minimize the risk function over $w$, which is defined as the expected loss \cite[p. 20]{Vapnik}:
\begin{align}
	J(w) = \mathbb{E}_{{\x_i}}\{Q(w,{\x_i})\}\quad\quad\textrm{(risk function)}
	\label{eq:J_ind}
\end{align}
It is usually assumed that the data $\x_i$ are independent and identically distributed (i.i.d.).  It is also assumed that the risk function $J(w)$ is strongly convex. Obviously, when the data are stationary, then the risk $J(w)$ will not depend on time. Observe that we are denoting random quantities by using the \textbf{boldface} notation, which will be our convention in this article. One example that fits into this formulation is logistic regression \cite[p. 117]{Theodoridis} where the cost function is defined as:
\begin{align}
	J(w) \triangleq  \E_{\{\y_{i},\h_{i}\}} \left\{\frac{\rho}{2} \|w\|^2 + \log(1+e^{-\y_{i} \h_{i}^\T w})\right\}
	\label{eq:log-loss}
\end{align}
where $\x_{i}$ in \eqref{eq:J_ind} is now defined as the aggregate data $\{\y_{i},\h_{i}\}$ where $\y_{i}$ denotes the scalar label for feature vector $\h_{i} \in \mathbb{R}^M$. Moreover, $\rho$ is a positive scalar regularization parameter. We will utilize logistic regression in the simulation section to illustrate  our analysis.

In order to assess and compare the performance of algorithms that are used to minimize \eqref{eq:J_ind}, we adopt the excess-risk (ER) measure, which is defined as follows:
\begin{align}
	\mathrm{ER}(i) \triangleq \E\{J(\w_{i-1}) - J(w^o)\}\quad\quad\textrm{(excess-risk)}
	\label{eq:ER_ind}
\end{align}
where $w^o$ is the optimizer of \eqref{eq:J_ind} over all $w$ in the feasible space: \begin{align}
	w^o &\triangleq \arg\min_w\ J(w)
\end{align}
and $\w_{i-1}$ is the estimator for $w^o$ \emph{available} at time $i-1$. The reason why the excess-risk is evaluated using $\w_{i-1}$ is that excess-risk measures the generalization ability of a classifier on future data \emph{before} observing the data. The estimate $\w_{i}$, as we will see in Alg.~\ref{alg:diffusion}, would incorporate data from time $i$. The variable $\w_{i-1}$ is generally a random quantity since it will be influenced by randomness in the data arising from the gradient vector approximations that are used during the development of stochastic gradient procedures; the gradient approximations are referred to as instantaneous approximations in the adaptive literature \cite{haykin,sayed}, and are also sometimes called the gradient oracle in the machine learning literature (see, e.g., \cite{hazan2,Agrawal_Information_Theoretic}).  The expectation in \eqref{eq:ER_ind} is taken over the distribution of $\w_{i-1}$.

Considerable research has focused on deriving bounds for the excess-risk in gradient descent procedures for stand-alone classifiers. In this work, we pursue two extensions to these results. First, we assume that we have a network of $N$ learners connected by means of some topology. The only requirement is that the network be connected, meaning that there is a path connecting any two arbitrary agents in the network; this path may be through a sequence of other agents. Figure \ref{fig:sample_network} illustrates one such network. The nodes in the shaded region represent the neighborhood of node $1$ (denoted by $\mathcal{N}_1$). Second, we allow the statistical distribution of the data $\x_i$ to change with time. This change causes the optimizer $w^o$ to drift.
\begin{figure}
	\centering
	\includegraphics[width=0.25\textwidth]{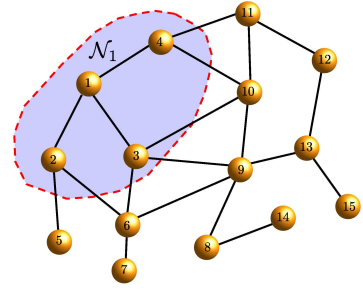}
	\caption{A connected network. The shaded region represents the neighborhood of node $1$.}
	\label{fig:sample_network}
\end{figure}

We associate with each agent $k$ in the network an individual loss function $Q_k(w,\x_{k,i})$ evaluated at the corresponding feature vector, $\x_{k,i}$. The corresponding strongly-convex risk function is generally time-varying and given by:
\begin{align}
J_{k,i}(w) \triangleq \E_{\x_{k,i}} \{Q_k(w,\x_{k,i})\}
\label{eq:J_k_def}
\end{align}
We further consider a global network risk, which is defined as the average of the individual risks over all nodes:
\begin{align}
	J^\mathrm{glob}_i(w) \triangleq \frac{1}{N}\sum_{k=1}^N J_{k,i}(w)\quad\quad\textrm{(network risk function)}
	\label{eq:j_glob}
\end{align}
The excess-risk at node $k$ is defined as:
\begin{align}
	\mathrm{ER}_k(i) \triangleq  \!\E\!\left\{J_{k,i}(\w_{k,i-1}) - J_{k,i}(w^o_i)\right\} \ \textrm{(excess-risk at node $k$)}
	\label{eq:ER_k}
\end{align}
while the network excess-risk is defined as:
\begin{align}
	\mathrm{ER}(i) &\triangleq  \frac{1}{N} \sum_{k=1}^N \mathrm{ER}_k(i) \nonumber\\
	&=\!\E\!\left\{\frac{1}{N}\!\!\sum_{k=1}^N \!J_{k,i}(\w_{k,i-1}) - J^\mathrm{glob}_i(w^o_i)\right\} \ \textrm{(network excess-risk)}
	\label{eq:ER_dist}
\end{align}
where in both cases 
\begin{align}
	w^o_i &\triangleq \underset{w}{\arg\min}\ J^\mathrm{glob}_i(w)
	\label{eq:w_o_glob}
\end{align}
When the distribution is stationary and $N=1$ (i.e., a network with a single node), we see that our formulation collapses to the one described by \eqref{eq:J_ind} and \eqref{eq:ER_ind}. We assume that the optimizer $w^o_i$ in \eqref{eq:w_o_glob} is also the optimizer of the component risk functions $J_{k,i}(w)$, i.e.,
\begin{equation}
\boxed{
	w^o_i = \underset{w}{\arg\min}\ J^\mathrm{glob}_i(w) = \underset{w}{\arg\min}\ J_{k,i}(w), \;\;\;\;k=1,2,\ldots,N
}
\label{eq:common_minimizer}
\end{equation}
This condition is satisfied when the nodes are sampling data arising from a time-varying distribution defined by the same set of parameters. That is, when the data do not reflect local preferences, then \eqref{eq:common_minimizer} is usually satisfied. When the environment is stationary and $w^o$ is therefore constant, reference \cite{Jianshu_common_wo} derived the distributed algorithm listed in the table below for the solution of \eqref{eq:w_o_glob}. One of the objectives of this work is to show that this same algorithm can also be used to track drifting concepts $w^o_i$. We will evaluate how well it performs in this case. 

\begin{algorithm}[!ht]
\caption{\!{\bf{:}}\ Diffusion strategy for risk optimization}
\label{alg:diffusion}
{\small{
\begin{algorithmic}
	\STATE Consider the problem of optimizing the network risk function \eqref{eq:j_glob} in a distributed manner. For each node $k$, let ${\cal N}_k$ denote the set of its neighbors, namely, all nodes with which node $k$ can share information (including node $k$ itself). Select non-negative coefficients $\{a_{1,\ell k}\}$, $\{c_{\ell k}\}$, and $\{a_{2,\ell k}\}$ that satisfy
	\begin{align}
	\begin{cases}
		\underset{\ell \in \mathcal{N}_k}{\sum} a_{1,\ell k} = \underset{\ell \in \mathcal{N}_k}{\sum} c_{k \ell} = \underset{\ell \in \mathcal{N}_k}{\sum} a_{2,\ell k} = 1\\
		a_{1,\ell k} = c_{\ell k} = a_{2,\ell k} = 0, \textrm{when}\ \ell\notin {\cal N}_k
	\end{cases}
	\label{eq:left_stochastic}
	\end{align}
	Each node $k$ starts with an initial weight estimate $w_{k,0}$ and repeats over $i\geq 1$:
	\begin{align}
	\phi_{k,i-1} &= \sum_{\ell=1}^N a_{1,\ell k} w_{\ell,i-1} \label{eq:C1}\\
	\psi_{k,i} &= \phi_{k,i-1} - \mu \sum_{\ell = 1}^N c_{\ell k} \widehat{\nabla} J_{\ell,i-1}(\phi_{k,i-1}) \label{eq:A}\\
	w_{k,i}   &= \sum_{\ell=1}^N a_{2,\ell k} \psi_{\ell,i} \label{eq:C2}
	\end{align}
	where $\widehat{\nabla} J_{\ell,i-1} (\cdot)$ is an approximation for the true gradient vector $\nabla J_{\ell,i-1}(\cdot)$, and $\mu$ is a positive step-size parameter.
\end{algorithmic}}}
\end{algorithm}
In Alg.~\ref{alg:diffusion}, each node $k$ interacts with its one-hop neighbors and updates its parameter estimate using approximations for the true gradient vector. The coefficients $a_{1,\ell k}$, $c_{\ell k}$, and $a_{2,\ell k}$ are non-negative scalars corresponding to the $(\ell,k)$ entries of $N\times N$ matrices $A_1$, $C$, and $A_2$, respectively. In view of the requirement \eqref{eq:left_stochastic}, the matrices $A_1$ and $A_2$ are left-stochastic while the matrix $C$ is right-stochastic. Different choices for $\{A_1,A_2,C\}$ lead to different variations of the algorithm. For example, setting $A_1=I$ and $A_2=A$ leads to an Adapt-then-Combine (ATC) strategy where the first step is an adaptation step, followed by combination:
\begin{align}
	\begin{cases}
		\psi_{k,i} &= w_{k,i-1} - \mu\  \displaystyle{\sum_{\ell = 1}^N} c_{\ell k} \widehat{\nabla} J_{\ell,i-1}(w_{k,i-1})\\
		w_{k,i}   &= \displaystyle{\sum_{\ell = 1}^N} a_{\ell k} \psi_{\ell,i}
	\end{cases}\quad\quad\textrm{(ATC)}
	\label{eq:ATC1}
\end{align}
On the other hand, setting $A_1=A$ and $A_2=I$ leads to a Combine-then-Adapt (CTA) strategy where adaptation follows combination:
\begin{align}
 	\begin{cases}
 		\phi_{k,i-1} &= \displaystyle{\sum_{\ell = 1}^N} a_{\ell k} w_{\ell,i-1}\\
	w_{k,i} &= \phi_{k,i-1} - \mu\ \displaystyle{\sum_{\ell = 1}^N} c_{\ell k} \widehat{\nabla} J_{\ell,i-1}(\phi_{k,i-1})\\
 	\end{cases}\quad\quad\textrm{(CTA)}
\end{align}
In either the ATC or CTA versions, we can further set $C=I_N$. In this case, the adaptation step would rely only on the gradient vector at node $k$, e.g.,
\begin{align}
	&\begin{cases}
		\psi_{k,i} &= w_{k,i-1} - \mu \widehat{\nabla} J_{k,i-1}(w_{k,i-1})\\
		w_{k,i}   &= \displaystyle{\sum_{\ell = 1}^N} a_{\ell k} \psi_{\ell,i}
	\end{cases}\quad\quad\textrm{(ATC)}\label{eq:ATC2}\\
	&\begin{cases}
 		\phi_{k,i-1} &= \displaystyle{\sum_{\ell = 1}^N} a_{\ell k} w_{\ell,i-1}\\
	w_{k,i} &= \phi_{k,i-1} - \mu \widehat{\nabla} J_{k,i-1}(\phi_{k,i-1})\\
 	\end{cases}\quad\quad\!\!\!\!\textrm{(CTA)}
 	\label{eq:CTA_C=I}
\end{align}
Likewise, setting $A_1\!=\!A_2\!=\!C\!=\!I_N$ leads to the non-cooperative mode of operation where each node optimizes its risk individually and independently of the other nodes:
\begin{align}
w_{k,i} &= w_{k,i-1} - \mu \widehat{\nabla} J_{k,i-1}(w_{k,i-1})\quad\quad\textrm{(no cooperation)}
\label{eq:no_coop}
\end{align}
It is important to note that diffusion strategies are different in a fundamental way from the algorithm presented in \cite{yan,nedic2}, which has the form:
\begin{align}
	w_{k,i} = \sum_{\ell \in \mathcal{N}_k} a_{\ell k} w_{\ell,i-1} - \mu \widehat{\nabla} J_{k,i-1}(w_{k,i-1})
	\label{eq:consensus}
\end{align}
For instance, comparing with \eqref{eq:CTA_C=I}, we see that one critical difference is that the gradient vector used in \eqref{eq:consensus} is evaluated at $w_{k,i-1}$, whereas it is evaluated at $\phi_{k,i-1}$ in \eqref{eq:CTA_C=I}. In this way, information beyond the immediate neighborhood of node $k$ influences the updates at $k$ more effectively in the diffusion case \eqref{eq:CTA_C=I}. This order of the computations has an important implication on the dynamics of the resulting algorithm. For example, it can be verified that even if all individual learners are stable in the mean-square sense, a network of learners using an update of the form \eqref{eq:consensus} can become unstable, while the same network using the diffusion updates \eqref{eq:ATC2}-\eqref{eq:CTA_C=I} will always be stable regardless of the choice of the matrix $A$---see \cite{shine_diffusion_consensus}. In the next section, we establish a relationship between excess-risk and mean-square-error (MSE) and provide the main assumptions for the rest of the manuscript.

\section{Excess-risk, Weighted MSE, and Main Assumptions}
\label{sec:assumptions}
\noindent Introduce the prediction and filtering weighted mean-square-errors (MSEs):
\begin{align}
\E\|\tilde{\w}_{k,i}^p\|^2_T &\triangleq \E\|w^o_i-\w_{k,i-1}\|^2_T \label{eq:WMSE_pred}\\
\E\|\tilde{\w}_{k,i}^f\|^2_T &\triangleq \E\|w^o_i-\w_{k,i}\|^2_T \label{eq:WMSE_filt}
\end{align}
where $\|x\|^2_T \triangleq x^\T T x$ for any positive semi-definite weighting matrix $T$. When the environment is stationary (i.e., when $w^o_i = w^o$ for all $i$), we notice that there is effectively no difference in the filtering and prediction MSE in steady-state. The reason why we need to introduce the two errors is that the excess-risk \eqref{eq:ER_dist} requires that a previous estimate (prior to observing the current data) is used to evaluate the performance of the classifier. We will see shortly that under the non-stationary model we adopt in this work, the prediction and filtering MSEs are related to each other. To proceed, we introduce the following assumption regarding the Hessian matrices of the functions $J_{k,i}(w)$.\\
\begin{assumption}
	\label{ass:HessianAssumption}
	The Hessian matrices of the individual risk functions $J_{k,i}(w)$ are uniformly bounded from below and from above for all $k \in \{1,\ldots,N\}$ and time $i$:
	\begin{equation}
		\lambda_{\min} I_M \leq \nabla^2 J_{k,i}(w) \leq \lambda_{\max} {I_M}
		\label{eq:HessianAssumption}
	\end{equation}
	where $0 < \lambda_{\min} \leq \lambda_{\max} < \infty$.  
\end{assumption}
Assumption \ref{ass:HessianAssumption} essentially states that the risk functions encountered for all times and at all nodes can be upper and lower-bounded by a quadratic cost. The lower-bound on the Hessians in \eqref{eq:HessianAssumption} translates into saying that the functions $J_{k,i}(w)$ are strongly-convex \cite[pp. 9-10]{Polyak}. For example, the risk function \eqref{eq:log-loss} for regularized logistic regression satisfies Assumption \ref{ass:HessianAssumption}.

We may note that in \cite{hazan2,microsoft,Nedic}, the risk functions are assumed to have bounded gradient vectors (as opposed to bounded Hessian matrices). Clearly, there are cost functions (such as quadratic cost functions) where the gradient is not bounded while the Hessian is; for this reason, our Assumption \ref{ass:HessianAssumption} enables the subsequent analysis to be applicable to a larger class of risk functions.

Now consider the excess-risk suffered at iteration $i$ at node $k$. It can be expressed as:
\begin{align}
	\textrm{ER}_k(i) &= \mathbb{E}\{J_{k,i}(\w_{k,i-1}) - J_{k,i}(w^o_i)\} \nonumber\\
	& \stackrel{\mathrm{(a)}}{=} \mathbb{E}\left\{-\int_0^1 \nabla J_{k,i}(w^o_i- t \ \tilde{\w}_{k,i}^p)^\T dt \  \tilde{\w}_{k,i}^p \right\} \nonumber\\
	&						 \stackrel{\mathrm{(b)}}{=}  \mathbb{E}\left\{\!-\!\int_0^1 \nabla J_{k,i}(w^o_i)^\T dt \ \tilde{\w}_{k,i}^p + \right. \nonumber\\
	&\left.\!\quad\quad\quad\quad \tilde{\w}_{k,i}^{p \T} \!\!\left[\int_0^1 \!\!t \!\int_0^1 \!\!\!\nabla^2 J_{k,i}(w^o_i \!-\! s \ t \  \tilde{\w}_{k,i}^p)ds dt\right] \!\tilde{\w}_{k,i}^p\right\} \nonumber\\
	& \stackrel{\mathrm{(c)}}{=} \mathbb{E}\left\{\tilde{\w}_{k,i}^{p \T} \left[\int_0^1\!\! t\!\! \int_0^1\!\!\! \nabla^2 J_{k,i}(w^o_i \!-\! s \ t \ \tilde{\w}_{k,i})ds dt \right] \tilde{\w}_{k,i}^p\right\} \nonumber\\
	& \triangleq \mathbb{E}\{\|\tilde{\w}_{k,i}^p\|_{{\bm{T}}_{k,i}}^2\} \label{eq:MSE_ind}\\
	&\stackrel{\mathrm{(d)}}{\leq} \frac{\lambda_{\max}}{2} \mathbb{E}\| \tilde{\w}_{k,i}^p\|^2
	\label{eq:function_diff_bound}
\end{align}
where
\begin{align}
	\tilde{\w}_{k,i}^p \triangleq w^o_i - \w_{k,i-1}
\end{align}
Steps (a) and (b) in the sequence of calculations that led to \eqref{eq:function_diff_bound} are a consequence of the following mean-value theorem from \cite[p. 24]{Polyak}:
\begin{equation}
	f(a+b) = f(a) + \int_0^1 \nabla f(a + t\cdot b)^\T\ dt \cdot b
	\label{eq:polyak1}
\end{equation}
Step (c) is a consequence of the fact that $w^o_i$ optimizes $J_{k,i}(w)$ so that $\nabla\!J_{k,i}(w^o_i) \!\!=\! 0$. Step (d) is due to \eqref{eq:HessianAssumption}, where we defined the weighting matrix as:
\begin{align}
	\bm{T}_{k,i} \triangleq \left[\int_0^1 t \int_0^1 \nabla^2 J_{k,i}(w^o_i - s \ t \ \tilde{\w}_{k,i}^p)ds dt \right]
	\label{eq:weight_T}
\end{align}
It follows from \eqref{eq:function_diff_bound} that if the MSE at all nodes is uniformly bounded over time, then the network excess-risk \eqref{eq:ER_dist} will be bounded by the same bound scaled by $\lambda_{\max}/2$. For this reason, it is justified that we examine the mean-square-error performance of the diffusion strategy \eqref{eq:C1}-\eqref{eq:C2} under stationary and non-stationary conditions, and then use these results to bound the network excess-risk by using the relation:
\begin{align}
	\mathrm{ER}(i) &= \frac{1}{N} \sum_{k=1}^N \E\left\{\|\tilde{\w}_{k,i}^p\|^2_{\bm{T}_{k,i}}\right\} = \E\left\{\|\tilde{\w}_{i}^p\|^2_{\bm{\mathcal{T}}_i}\right\}
	\label{eq:ER_cal_T}
\end{align}
where
\begin{align}
\tilde{\w}_i^p &\triangleq \mbox{\rm col}\{\tilde{\w}_{1,i}^p,\tilde{\w}_{2,i}^p,\ldots,\tilde{\w}_{N,i}^p\}
\end{align}
collects the prediction weight error vectors across all nodes, and
\begin{align}
	\bm{\mathcal{T}}_i \triangleq \frac{1}{N} \mathrm{diag}\{\bm{T}_{1,i},\ldots,\bm{T}_{N,i}\}
	\label{eq:def_T_weight_matrix}
\end{align}
A key point to stress here is that the network excess-risk {\em{is}} the weighted network MSE when the weighting matrix is set to the above $\bm{\mathcal{T}}_i$. In order to perform the mean-square-error analysis of the network, we need to introduce some assumptions. First, we introduce a modeling assumption regarding the perturbed gradient vectors used by the algorithm.\\
\begin{assumption}
	\label{ass:noiseModeling}
	We model the perturbed gradient vector as:
	\begin{align}
		\widehat{\nabla}_w J(\w) = \nabla_w J(\w) + \v_{k,i}(\w)
		\label{eq:grad_model}
	\end{align}
	where, conditioned on the past history of the estimators $\{\w_{k,j}\}$ for $j \leq i-1$ and all $k$, the gradient noise $\v_{k,i}(\w)$ satisfies:
\begin{align}
		&\mathbb{E}\{\v_{k,i}(\w) | \mathcal{H}_{i-1}\} = 0\\
		&\mathbb{E}\{\|\v_{k,i}(\w)\|^2\} \leq \alpha \cdot\mathbb{E}\|w^o_i-\w\|^2 + \sigma_v^2 \label{eq:noise_assumption_variance}
	\end{align}	
	for some $\alpha\geq 0$, $\sigma_v^2 \geq 0$, and where $\mathcal{H}_{i-1} \triangleq \{\w_{k,j}:k = 1 , \ldots, N\ \mathrm{and}\ j \leq i-1\}$.
\end{assumption}
Assumption \ref{ass:noiseModeling} models the perturbed gradient vector as the true gradient plus some noise. This noise consists of two parts: relative noise and absolute noise. The variance of the relative noise component depends on the distance between the estimate and the optimum at time $i$ ($w^o_i$). On the other hand, the variance of the absolute noise term is represented by the factor $\sigma_v^2$ in \eqref{eq:noise_assumption_variance}. As the quality of the weight estimate by the node improves, the power of the relative noise component decreases. The second part of the noise bound in \eqref{eq:noise_assumption_variance} refers to absolute noise; this component does not depend on the current weight estimate and is bounded by $\sigma_v^2$. The absolute noise guarantees that there will always remain some perturbation on the estimated gradient vectors even when the gradient is evaluated at the optimum $w^o_i$. We may remark that, in contrast to Assumption 2, most earlier references  \cite{hazan2,microsoft,Nedic} in the literature assumed only the presence of the absolute noise term and ignored relative noise. The following example is from \cite{Jianshu_common_wo}.

\begin{example}
{\rm Consider \texttt{ADALINE} \cite[p. 103]{Theodoridis}, \cite{LMS}. Let the binary class label at node $k$ and time $i$ be denoted by $\y_{k,i} \in \{-1,+1\}$. Let the feature vector at node $k$ and time $i$ be denoted by $\h_{k,i} \in \mathbb{R}^M$. \texttt{ADALINE} optimizes the quadratic loss:
\begin{align}
	Q_k(w,\y_{k,i},\h_{k,i}) \triangleq \left|\y_{k,i}-\h_{k,i}^\T w\right|^2
	\label{eq:Loss_Adaline}
\end{align}
The risk function is then the expectation of the loss in \eqref{eq:Loss_Adaline}:
\begin{align}
	J_k(w) \triangleq \mathbb{E} \left|\y_{k,i}-\h_{k,i}^\T w\right|^2
	\label{eq:J_adaline}
\end{align}
Let the data satisfy the linear model:
\begin{align}
	\y_{k,i} \triangleq \h_{k,i}^\T w + \z_{k}(i)
	\label{eq:linear_model}
\end{align}
where the feature vectors $\{\h_{k,i}\}$ are assumed to be zero-mean with a constant covariance matrix $R_{h,k} \triangleq \E \{\h_{k,i} \h_{k,i}^\T\}$. The noise sequence $\{\z_k(i)\}$ is assumed to be zero-mean and white with constant variance $\sigma_{z,k}^2$. The optimal solution $w^o$ that minimizes \eqref{eq:J_adaline} satisfies the normal equations:
\begin{align}
	r_{h y,k} = R_{h,k} w^o
	\label{eq:normal_equations}
\end{align}
where $r_{h y,k} \triangleq \E \{\h_{k,i} \y_{k,i}\}$. The feature vectors and noise are assumed to be independent over nodes and time. One instantaneous approximation for the gradient vector is:
\begin{align}
	\widehat{\nabla} J_k(w) &= -2\h_{k,i} (\y_{k,i} - \h_{k,i}^\T w)
	\label{eq:ADALINE_inst_approx}
\end{align}
Using \eqref{eq:linear_model}-\eqref{eq:ADALINE_inst_approx} and \eqref{eq:grad_model}, we have that the gradient noise satisfies:
\begin{align}
	\v_{k,i}(\w) &= \widehat{\nabla} J_k(w) - \nabla J_k(w) \nonumber\\
				&= 2 (R_{h,k} - \h_{k,i} \h_{k,i}^\T) (w^o - \w) - 2 \h_{k,i} \z_{k}(i)
\end{align}
We then have that:
\begin{align}
	\E \{\v_{k,i} | \mathcal{H}_{i-1}\} &= 0\\
	\E \|\v_{k,i}(\w)\|^2 &\!\leq\! 4 \E \left\{\left(\sigma_{\max}\left(R_{h,k} -  \h_{k,i} \h_{k,i}^\T\right)\right)^2\right\} \cdot \E \| w^o - \w \|^2 + \nonumber\\
	&\quad\! 4 \Tr(R_{h,k}) \sigma_{z}^2 \label{eq:adaline_var}
\end{align}
for all $\w \in \mathcal{H}_{i-1}$ where $\sigma_{\max}(A)$ denotes the maximum singular value of its matrix argument $A$. Therefore the \texttt{ADALINE} algorithm satisfies Assumption \ref{ass:noiseModeling} under \eqref{eq:linear_model}. Note that both noise terms (relative and absolute) appear on the right hand side of \eqref{eq:adaline_var}.\hfill\qed
}
\end{example}

We shall distinguish between two scenarios in our analysis. In the first case, we assume the optimizer $w^o$ does not change with time (i.e., we assume stationarity). In the second case, we assume the optimizer $w^o_i$ varies slowly with time according to a random walk model.  \\
\begin{assumption}
	\label{ass:stationary_distribution}
	The data process $\x_i$ is stationary. This implies that the risk functions $J_{k,i}(w)$ defined in \eqref{eq:J_k_def} are time-invariant:
	\begin{align}
		J_{k,i}(w) = J_k(w),\quad \textrm{for all\ } i
	\end{align}	
	In addition, this implies that the optimizer $w^o_i$ of the network risk function and all individual risk functions is constant, $w^o_i = w^o$ for all $i$.
\end{assumption}
When the environment is non-stationary, we shall assume instead a random-walk model for the minimizer $w^o_i$.\\
\begin{assumption}
\label{ass:random_walk}
In the non-stationary case, the time-varying optimal vector $\w^o_i$ is modeled as a random walk:
\begin{align}
\w^o_i \triangleq \w^o_{i-1} + \q_i
\label{eq:random_walk_model}
\end{align}
where the zero-mean sequence $\q_i$ has covariance $\E \{\q_i \q_i^\T\} = Q$ and is independent of the quantities $\{\v_{k}(\w_{k,j}), \q_{j}\}$ for all $j<i$. The mean of $\w^o_i$ is set to $\E\{\w^o_i\} = w^o$.
\end{assumption}

Observe that the time-varying optimizer $\w^o_i$ is now denoted by a boldface letter due to the addition of the random noise component $\q_i$; furthermore, the expectation in the definition of excess-risk \eqref{eq:ER_dist} will now operate over this randomness as well. In machine learning, this random-walk model was used in \cite{Jenn2010} to describe the concept drift of a classifier with a moving hyperplane. This model is also commonly used to evaluate the tracking performance of adaptive filters \cite[pp. 271-272]{sayed}.

Assumption \ref{ass:random_walk} models the desired set of parameters $\w^o_i$ as a non-stationary first-order autoregressive (AR(1)) process. Such AR(1) processes are commonly used to model non-stationary behavior in various contexts such as adaptive filtering \cite{sayed} and financial data modeling \cite[pp.~142--146]{AnalysisofEconomicData}, \cite[pp.~72--73]{TimeSeriesAnalysis}. Similar models have been used in other contexts such as web searching. For example, the original PageRank algorithm, used by the Google search engine, uses a naive ``random surfer'' that models an average user that traverses a random walk over the graph of Internet webpages \cite{PageRank}. Although the model is simplistic in terms of modeling the shifts of a user's interest, it has been demonstrated to achieve excellent page sorting capability.

Given Assumption \ref{ass:random_walk}, we can relate the prediction and filtering errors introduced in \eqref{eq:WMSE_pred}-\eqref{eq:WMSE_filt}:
\begin{align}
	\E \|\tilde{\w}_{k,i}^p\|_T^2 &= \E\|\w_i^o - \w_{k,i-1}\|_T^2 \nonumber\\
	&= \E\|\w_{i-1}^o - \w_{k,i-1} + \q_i\|_T^2 \nonumber \\
	&= \E\|\w_{i-1}^o - \w_{k,i-1} \|_T^2 + \E\|q_i\|_T^2 \nonumber\\
	&= \E \|\tilde{\w}_{k,i}^f\|_T^2 + \Tr(Q T)
	\label{eq:relationship_prediction_filtering}
\end{align}
This means that in order to show that the prediction error $\E \|\tilde{\w}_{k,i}^p\|_T^2$ remains bounded for the diffusion algorithm, it is sufficient to analyze the filtering error $\E \|\tilde{\w}_{k,i}^f\|_T^2$ and show that it is bounded.

We will further introduce an assumption to be used later in the article to derive relationships between the performance of the algorithms described by \eqref{eq:ATC1}-\eqref{eq:no_coop}.
\begin{assumption}
	\label{ass:common_cost}
	The risk functions across the nodes are identical:
	\begin{align}
		J_{k,i}(w) = J_{i}(w), \quad k \in \{1,\ldots,N\}
	\end{align} 
\end{assumption}
Assumption \ref{ass:common_cost} states that all nodes have the same risk function, but this does not mean that the nodes will receive the same {\em{data realizations}}. Assumption \ref{ass:common_cost} is satisfied when the nodes utilize the same loss function $Q(\cdot,\cdot)$ and receive data arising independently from the same distribution.

\section{Stationary Environments}
In this section, we focus on obtaining convergence results for the excess-risk for the distributed diffusion strategy \eqref{eq:C1}-\eqref{eq:C2} under stationary conditions. First, we show that the diffusion algorithm can achieve arbitrarily small excess-risk given appropriately chosen step-sizes.
\begin{theorem}[Excess-risk for stationary environments is $O(\mu)$]
\label{thm:convergence}
\hfill\\Let Assumptions \ref{ass:HessianAssumption}-\ref{ass:stationary_distribution} hold. Given a small constant step-size $\mu$ that satisfies:
	\begin{align}
		0 < \mu < \min\left\{\frac{2 \lambda_{\max}}{\lambda_{\max}^2 + \alpha}, \frac{2 \lambda_{\min}}{\lambda_{\min}^2 + \alpha}\right\}
		\label{eq:mu_condition}
	\end{align}	
	Then, Algorithm \ref{alg:diffusion} achieves arbitrarily small excess-risk at each node $k$, i.e.:
	\begin{equation}
		\underset{i \rightarrow \infty}{\limsup}\ \mathrm{ER}_k(i) \leq \epsilon
		\label{eq:RegretBound}
	\end{equation}
where $\epsilon$ is defined as:
\begin{equation}
	\epsilon \triangleq \frac{\sigma_v^2}{4}\cdot\frac{\lambda_{\max}}{\lambda_{\min}} \cdot \mu
	\label{eq:epsilon}
\end{equation}
and is directly proportional to the step-size $\mu$. Since each node can achieve an arbitrarily small excess-risk, the network excess-risk in \eqref{eq:ER_dist} can also be made arbitrarily small.
\end{theorem}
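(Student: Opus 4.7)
The plan is to reduce the excess-risk claim to a filtering-MSE claim via the bound \eqref{eq:function_diff_bound}, and then exploit the diffusion dynamics to obtain a linear recursion for the network MSE whose fixed point is $O(\mu \sigma_v^2)$.

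First, because Assumption \ref{ass:stationary_distribution} fixes the optimizer, the prediction MSE satisfies $\E\|\tilde\w_{k,i}^p\|^2 = \E\|\tilde\w_{k,i-1}^f\|^2$; combined with \eqref{eq:function_diff_bound}, it suffices to show that $\limsup_i \E\|\tilde\w_{k,i}^f\|^2 \leq \mu\sigma_v^2/(2\lambda_{\min})$ at every node. To that end, I would set $\tilde\w_{k,i}=w^o-\w_{k,i}$ (and likewise for $\bphi$, $\bpsi$), subtract $w^o$ from \eqref{eq:C1}--\eqref{eq:C2}, and apply the mean-value identity \eqref{eq:polyak1} to each $\nabla J_\ell$ at $\bphi_{k,i-1}$, using the common-minimizer condition \eqref{eq:common_minimizer} to eliminate the zeroth-order term. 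The adaptation step then reads
\[\tilde\bpsi_{k,i} = \bigl(I - \mu \textstyle\sum_\ell c_{\ell k} H_{\ell,k,i-1}\bigr)\tilde\bphi_{k,i-1} - \mu \textstyle\sum_\ell c_{\ell k}\,\v_{\ell,i-1}(\bphi_{k,i-1}),\]
where $H_{\ell,k,i-1}\triangleq\int_0^1 \nabla^2 J_\ell(w^o - t\,\tilde\bphi_{k,i-1})\,dt$ satisfies $\lambda_{\min} I \preceq H_{\ell,k,i-1} \preceq \lambda_{\max} I$ by Assumption \ref{ass:HessianAssumption}.

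Next, I would take expected squared norms and apply Jensen's inequality to the two combination steps, which is justified because $A_1,A_2$ are left-stochastic, so each $\w_{k,i}$ (resp.\ $\bphi_{k,i-1}$) is a convex combination of the $\bpsi_{\ell,i}$ (resp.\ $\w_{\ell,i-1}$). Using Assumption \ref{ass:noiseModeling} to control the gradient-noise term---its relative piece $\alpha\,\E\|\tilde\w\|^2$ folds into the contraction coefficient while its absolute piece contributes $\mu^2\sigma_v^2$---the argument collapses to a scalar recursion of the form $\E\|\tilde\w_{k,i}\|^2 \leq \gamma\,\max_j\E\|\tilde\w_{j,i-1}\|^2 + \mu^2\sigma_v^2$, where $\gamma=\max_{\lambda\in\{\lambda_{\min},\lambda_{\max}\}}|1-\mu\lambda|^2 + \mu^2\alpha$. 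The two branches of the step-size bound \eqref{eq:mu_condition} are exactly what force $\gamma<1$. Iterating yields a steady-state MSE of $\mu^2\sigma_v^2/(1-\gamma)\sim \mu\sigma_v^2/(2\lambda_{\min})$ to leading order in $\mu$, and multiplying by the $\lambda_{\max}/2$ of \eqref{eq:function_diff_bound} delivers $\epsilon$ as in \eqref{eq:epsilon}.

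The hardest part is obtaining a sharp spectral bound on the random, asymmetric, node-dependent matrix $I-\mu\sum_\ell c_{\ell k}H_{\ell,k,i-1}$. A blunt use of Assumption \ref{ass:HessianAssumption} suffices to get the qualitative $O(\mu)$ behaviour, but pinning down the precise coefficient $\lambda_{\max}/\lambda_{\min}$ in \eqref{eq:epsilon} requires tracking the worst-case contraction $\max_{\lambda\in[\lambda_{\min},\lambda_{\max}]}|1-\mu\lambda|$ uniformly across nodes and iterations, and carefully handling the fact that the $H_{\ell,k,i-1}$ depend on the past history through $\bphi_{k,i-1}$ (which is $\mathcal{H}_{i-1}$-measurable, so Assumption \ref{ass:noiseModeling} can still be invoked on the conditional noise variance).
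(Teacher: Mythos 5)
Your proposal is correct and follows essentially the same route as the paper: both reduce the excess-risk to the mean-square-error via \eqref{eq:function_diff_bound} and then bound $\limsup_i \E\|\tilde{\w}_{k,i-1}\|^2$ by $\sigma_v^2\mu/(2\lambda_{\min})$, the only difference being that the paper obtains this MSE bound by citing Theorem 1 of \cite{Jianshu_common_wo} while you re-derive it through the error recursion, Jensen's inequality on the combination steps, and the contraction coefficient $\gamma$ --- which is precisely the machinery the paper itself deploys in \ref{app:proof_convergence} for the non-stationary case (set $\Tr(Q)=0$ there to recover your recursion). Two cosmetic points: your scalar recursion implicitly takes $\sum_\ell c_{\ell k}=1$ (the general analysis carries $\|C\|_1$ and $C_*$), and your steady-state constant is obtained only to leading order in $\mu$, but the paper's own appendix makes the same small-step-size approximation, so this is consistent with the intended statement.
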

\begin{proof}
Given Assumption \ref{ass:stationary_distribution}, we have that the risk functions $J_{k,i}(w)$ are time-invariant ($J_{k,i}(w) = J_k(w)$) and the optimizer is constant ($w^o_i = w^o$) for all time $i$. Furthermore, we have from \eqref{eq:function_diff_bound} that the excess-risk at node $k$ is bounded by the scaled mean-square-error:
\begin{align}
    \mathbb{E}_{\w} \{J_{k}(\w_{k,i-1}) - J_{k}(w^o)\} &\leq \frac{\lambda_{\max}}{2} \mathbb{E}_{\w} \|\tilde{\w}_{k,i-1}\|^2
                              \label{eq:function_diff_bound2}
\end{align}
We now appeal to results from \cite{Jianshu_common_wo} (Theorem 1, Equations (67) and (72)) where it is shown that for $\mu$ satisfying \eqref{eq:mu_condition} it holds that
\begin{align}
	 \underset{i\rightarrow \infty}{\limsup}\  \mathbb{E}_\w\| \tilde{\w}_{k,i-1} \|^2 \leq \frac{\sigma_v^2}{2 \lambda_{\min}} \mu, \quad k \in \{1,\ldots,N\}
	 \label{eq:regret_term_bound}
\end{align}
Therefore, if we define $\epsilon$ as in \eqref{eq:epsilon} and further bound \eqref{eq:function_diff_bound2} using \eqref{eq:regret_term_bound}, we obtain \eqref{eq:RegretBound}.
\end{proof}
Result \eqref{eq:function_diff_bound2} implies that when the environment is stationary, meaning the optimizer $w^o_i$ is actually fixed for all time, then the excess-risk attained at each node in the network will be bounded by an arbitrarily small quantity that is proportional to the step-size $\mu$ when \eqref{eq:mu_condition} is satisfied. As we will see in the next section, this arbitrary reduction of the MSE is not generally possible for non-stationary environments.

In addition to Theorem \ref{thm:convergence}, it is possible to approximate the excess-risk at node $k$ (and also the network excess-risk) at steady state for sufficiently small $\mu$.
\begin{theorem}[Steady-state approximation for excess-risk]
\label{thm:SS_stationary}
\hfill\\Let Assumptions \ref{ass:HessianAssumption}-\ref{ass:stationary_distribution} hold. For small step-sizes that satisfy \eqref{eq:mu_condition}, the steady-state network excess-risk from \eqref{eq:ER_cal_T} for Alg.~\ref{alg:diffusion} can be approximated by:
	\begin{align}
	\lim_{i\rightarrow\infty} \mathrm{ER}(i) &\approx \mu^2 \mathrm{vec}\left(\mathcal{A}_2^\T \R_v^\T \mathcal{A}_2\right)^\T(I-\F)^{-1} \mathrm{vec}(\mathcal{T})
\label{eq:ER_SS_approx}
	\end{align}
	where 
	\begin{align}
		\F &\triangleq {\mathcal B}^\T \otimes {\mathcal B}^T    \label{eq:def_F}\\
		{\mathcal B}&={\mathcal A}_2^\T(I_{MN}-\mu {\mathcal D}){\mathcal A}_1^\T\\
		\mathcal{A}_1 &\triangleq A_1 \otimes I_M\\
		\mathcal{A}_2 &\triangleq A_2 \otimes I_M\\
		\mathcal{D} &\triangleq \sum_{\ell=1}^N \mathrm{diag}\left\{c_{\ell,1},\ldots,c_{\ell,N}\right\} \otimes \nabla^2 J_\ell(w^o) 	\label{eq:D_infty1}\\
		\mathcal{T} &= \frac{1}{2N} \mathrm{diag}\left\{\nabla^2 J_1(w^o), \ldots, \nabla^2 J_N(w^o)\right\}
	\end{align}
	and the symbol $\otimes$ denotes the Kronecker product operation \cite[p.~139]{laub} and $\textrm{vec}(\cdot)$ refers to the operation that stacks the columns of its matrix argument on top of each other \cite[p.~145]{laub}. Furthermore, the matrix $\R_v$ in \eqref{eq:ER_SS_approx} is defined as the covariance matrix of the vector $\g_i$:
	\begin{align}
		\g_i \triangleq \sum_{\ell=1}^N \mathrm{col}\{c_{\ell 1}\v_{\ell,i}(w^o),\dots,c_{\ell N}\v_{\ell,i}(w^o)\}
		\label{eq:bg_i}
	\end{align}
	That is, $\R_v \triangleq \E\{\g_i\g_i^T\}$.
\end{theorem}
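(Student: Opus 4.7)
The strategy is to carry out the standard small-step-size energy-conservation analysis in the diffusion literature: derive a linearized network error recursion around $w^o$, convert the associated weighted second-moment recursion into a scalar fixed-point equation via Kronecker vectorization, and identify the appropriate weighting with $\mathcal{T}$.

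\textbf{Step 1: linearized network error recursion.} First I would subtract $w^o$ from each step of Algorithm~\ref{alg:diffusion} and stack the filtering errors $\tilde{\w}_{k,i}^f = w^o-\w_{k,i}$ into the network vector $\tilde{\w}_i^f$ of size $MN\times 1$. Because of the common-minimizer condition \eqref{eq:common_minimizer}, $\nabla J_\ell(w^o)=0$, so the mean-value identity \eqref{eq:polyak1} yields
\[
\nabla J_\ell(\phi_{k,i-1}) \;=\; -\left[\int_0^1 \nabla^2 J_\ell\!\left(w^o - t\,\tilde{\phi}_{k,i-1}\right)dt\right]\tilde{\phi}_{k,i-1}.
\]
By Theorem \ref{thm:convergence}, $\E\|\tilde{\phi}_{k,i-1}\|^2=O(\mu)$ at steady state, so to leading order the bracketed integral can be replaced by $\nabla^2 J_\ell(w^o)$. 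Combined with the noise decomposition \eqref{eq:grad_model}, the combination/adaptation steps \eqref{eq:C1}--\eqref{eq:C2} then yield the linearized recursion
\[
\tilde{\w}_i^f \;\approx\; \mathcal{B}\,\tilde{\w}_{i-1}^f \;+\; \mu\,\mathcal{A}_2^\T\,\g_i,\qquad \mathcal{B} = \mathcal{A}_2^\T\bigl(I_{MN}-\mu\mathcal{D}\bigr)\mathcal{A}_1^\T,
\]
where $\g_i$ is the aggregated gradient-noise vector \eqref{eq:bg_i} and the block-diagonal Hessian $\mathcal{D}$ arises from grouping the per-agent combinations $\sum_{\ell} c_{\ell k}\nabla^2 J_\ell(w^o)$.

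\textbf{Step 2: weighted second-moment recursion.} For any positive semi-definite $\Sigma$ of size $MN\times MN$, Assumption \ref{ass:noiseModeling} (conditional zero mean) eliminates the cross-term, and evaluating the noise covariance at $w^o$---valid to leading order because the relative-noise contribution is itself $O(\mu)$---gives
\[
\E\|\tilde{\w}_i^f\|_\Sigma^2 \;=\; \E\|\tilde{\w}_{i-1}^f\|_{\mathcal{B}^\T\Sigma\mathcal{B}}^2 \;+\; \mu^2\,\Tr\!\bigl(\Sigma\,\mathcal{A}_2^\T\R_v\mathcal{A}_2\bigr).
\]
Letting $\sigma=\mathrm{vec}(\Sigma)$, the Kronecker identity $\mathrm{vec}(\mathcal{B}^\T\Sigma\mathcal{B})=(\mathcal{B}^\T\otimes\mathcal{B}^\T)\sigma=\F\sigma$ together with $\Tr(\Sigma X)=\mathrm{vec}(X^\T)^\T\sigma$ converts this matrix recursion into the scalar recursion
\[
\E\|\tilde{\w}_i^f\|_\sigma^2 \;=\; \E\|\tilde{\w}_{i-1}^f\|_{\F\sigma}^2 \;+\; \mu^2\,\mathrm{vec}\!\bigl(\mathcal{A}_2^\T\R_v^\T\mathcal{A}_2\bigr)^{\!\T}\sigma.
\]
Under condition \eqref{eq:mu_condition} the matrix $\F$ is stable, so iterating and letting $i\to\infty$ gives
\[
\lim_{i\to\infty}\E\|\tilde{\w}_i^f\|_\sigma^2 \;=\; \mu^2\,\mathrm{vec}\!\bigl(\mathcal{A}_2^\T\R_v^\T\mathcal{A}_2\bigr)^{\!\T}(I-\F)^{-1}\sigma.
\]

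\textbf{Step 3: identify the weighting with $\mathcal{T}$.} By \eqref{eq:ER_cal_T} the network excess-risk equals $\E\|\tilde{\w}_i^p\|^2_{\bm{\mathcal{T}}_i}$, and by the stationarity of the steady-state error process the filtering and prediction weighted MSEs coincide as $i\to\infty$. The integrand in the definition \eqref{eq:weight_T} of $\bm{T}_{k,i}$ is evaluated at a convex combination of $w^o$ and $\w_{k,i-1}$, so the same small-$\mu$ argument as in Step~1 lets me replace the random $\bm{\mathcal{T}}_i$ by its deterministic limit; the double integral $\int_0^1 t\int_0^1 \nabla^2 J_k(w^o)\,ds\,dt=\tfrac{1}{2}\nabla^2 J_k(w^o)$ then produces exactly the matrix $\mathcal{T}$ of the statement. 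Substituting $\sigma=\mathrm{vec}(\mathcal{T})$ into the displayed fixed-point formula yields \eqref{eq:ER_SS_approx}.

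The main obstacle is the bookkeeping associated with the three small-$\mu$ approximations (linearization of the integrated Hessian inside $\mathcal{B}$, evaluation of the noise covariance at $w^o$, and the deterministic-limit substitution for $\bm{\mathcal{T}}_i$): one must argue that the neglected corrections contribute only at orders higher than $\mu^2$ in the final expression, which is exactly why the result is stated as an approximation ($\approx$) rather than an equality. Once the linearized recursion is in place, the subsequent Kronecker/vec manipulation is entirely routine.
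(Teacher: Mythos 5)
Your proposal is correct and follows essentially the same route as the paper: approximate the random weighting $\bm{\mathcal{T}}_i$ by its deterministic small-$\mu$ limit $\mathcal{T}$, reduce the excess-risk to a $\mathcal{T}$-weighted steady-state MSE, and evaluate that MSE via the vectorized variance relation with $\F=\mathcal{B}^\T\otimes\mathcal{B}^\T$. The only difference is that you re-derive the linearized error recursion and the steady-state weighted-MSE formula from first principles, whereas the paper imports that intermediate result (its Eq.~(103)) directly from \cite{Jianshu_common_wo} and obtains $(I-\F)^{-1}\mathrm{vec}(\mathcal{T})$ by solving $\Sigma-\mathcal{B}^\T\Sigma\mathcal{B}=\mathcal{T}$ rather than by summing the iterated recursion---two equivalent ways of inverting the same Lyapunov-type relation.
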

\begin{proof}
From \eqref{eq:ER_cal_T}, we notice that the excess-risk can be evaluated as the weighted mean-square-error with weight matrix $\bm{\mathcal{T}}_i$ defined in \eqref{eq:def_T_weight_matrix} and \eqref{eq:weight_T}. When the environment is stationary and $w_i^o = w^o$ is constant, the weight matrix $\bm{T}_{k,i}$ in \eqref{eq:weight_T} becomes:
\begin{align}
    \bm{T}_{k,i} \triangleq \left[\int_0^1 t \int_0^1 \nabla^2 J_{k}(w^o - s \ t \ \tilde{\w}_{k,i-1})ds dt \right]
\end{align}
Furthermore, due to Theorem \ref{thm:convergence}, we have that the mean-square value of $\tilde{\w}_{k,i-1}$ is small for small step-size $\mu$ and large $i$. This implies that we can approximate the weight matrix $\bm{T}_{k,i}$ by
\begin{align}
     \bm{T}_{k,i} &\approx T_{k} = \left[\int_0^1 t \int_0^1 \nabla^2 J_{k,i}(w^o)ds\,dt \right] = \frac{1}{2} \nabla^2 J_{k}(w^o) \quad\!\! (\textrm{small } \mu)
     \label{eq:T_k_infty}
\end{align}
for large $i$ and small $\mu$. In other words, the matrix $\bm{T}_{k,i}$ becomes approximately deterministic and is given by $T_k$ at steady-state. Therefore, the matrix $\bm{\mathcal{T}}_i$ defined in \eqref{eq:def_T_weight_matrix} can, in steady-state, be approximated by the deterministic matrix:
\begin{align}
    \bm{\mathcal{T}}_i \approx \mathcal{T} = \frac{1}{N} \mathrm{diag}\left\{T_{1},\ldots,T_{N}\right\}
    \label{eq:T_infty}
\end{align}
We can now utilize results from \cite{Jianshu_common_wo} to approximate the excess-risk at steady-state. Using (103) from \cite{Jianshu_common_wo} we can write: 
\begin{align}
    \lim_{i\rightarrow\infty} \mathbb{E} \|\tilde{\w}_i\|^2_{\Sigma - \mathcal{B}^\T \Sigma \mathcal{B}} \approx \Tr(\mu^2 {\mathcal A}_2^\T {\mathcal R}_v^\T {\mathcal A}_2 \Sigma)
    \label{eq:Jianshu_approx}
\end{align}
where $\Sigma$ is an arbitrary positive semi-definite matrix that we are free to choose. Assume we choose $\Sigma$ such that 
\begin{align}
\Sigma - {\cal B}^\T \Sigma {\cal B}={\cal T}
\end{align}
for some ${\mathcal T}$, which could be equal to \eqref{eq:T_infty} or some other choice (see Table \ref{tbl:metrics}). If we stack the columns of $\Sigma$ into a vector $\sigma={\mathrm{vec}}(\Sigma)$, then the above equality implies that $\sigma$ is chosen as 
\begin{align}
\sigma=({\mathcal I}-{\mathcal F})^{-1}{\mathrm{vec}}({\mathcal T})
\end{align}
The matrix $(I-{\mathcal F})$ is invertible for sufficiently small step-sizes (see App. C in \cite{Jianshu_common_wo}). Therefore, we conclude from \eqref{eq:Jianshu_approx} that
\begin{align}
\lim_{i\rightarrow\infty} \E\|\tilde{\w}_{i}\|^{2}_\mathcal{T} &\approx \mu^2 \mathrm{vec}\left(\mathcal{A}_2^\T \R_v^\T \mathcal{A}_2\right)^\T(I-\F)^{-1} \mathrm{vec}(\mathcal{T})
\label{eq:ER_approx_2}
\end{align}
Different choices for ${\mathcal T}$ are possible in \eqref{eq:ER_approx_2}. For example, if we select ${\mathcal T}$ as in \eqref{eq:T_infty}, then \eqref{eq:ER_approx_2} would approximate the network excess-risk \eqref{eq:ER_dist} at steady-state. Table \ref{tbl:metrics} lists other choices for ${\mathcal T}$. 
\end{proof}
Different metrics can be evaluated by choosing $\mathcal{T}$ appropriately. For instance, in order to evaluate the mean-square-error at node $k$, we let $\mathcal{T} = E_{kk}$ where $E_{kk}$ is the zero matrix with a single $1$ in the $k$-th diagonal element. On the other hand, in order to evaluate the excess-risk at node $k$, we let $\mathcal{T} = E_{kk} \otimes T_{k}$ where $T_{k} = \frac{1}{2} \nabla^2 J_k(w^o)$.

\begin{table*}
\caption{Choice of $\mathcal{T}$ for the evaluation of different performance
metrics. $E_{kk}$ indicates the all zero matrix with a single $1$
in the $k$-th diagonal element.}
\begin{centering}
\begin{tabular}{|c|c|c|c|c|}
\hline
Metric & $\mathbb{E}_{\w}\left\{ J_{k}(\w_{k,\infty})-J_{k}(w^{o})\right\} $ & ${\displaystyle \frac{1}{N}\sum_{k=1}^{N}\mathbb{E}_{\w}\left\{ J_{k}(\w_{k,\infty})-J_{k}(w^{o})\right\} }$ & $\mathbb{E}_{\w}\left\{ \|\tilde{\w}_{k,\infty}\|^{2}\right\} $ & ${\displaystyle \frac{1}{N}\sum_{k=1}^{N}\mathbb{E}_{\w}\left\{ \|\tilde{\w}_{k,\infty}\|^{2}\right\} }$\tabularnewline
\hline
\hline
$\mathcal{T}$ & $E_{kk}\otimes T_{k}$ & $\frac{1}{N}\mathrm{diag}\{T_{1},\ldots,T_{N}\}$ & $E_{kk}$ & $\frac{1}{N}I_{MN}$\tabularnewline
\hline
\end{tabular}
\par\end{centering}
\label{tbl:metrics}
\end{table*}

It is possible to compare the performance of Alg.~\ref{alg:diffusion} against that of non-cooperative processing \eqref{eq:no_coop} when the nodes act individually and do not cooperate with each other. The non-cooperative case \eqref{eq:no_coop} is a special case of Alg.~\ref{alg:diffusion} when the matrices $\{A_1,A_2,C\}$ are all set equal to the identity matrix.
\begin{theorem}[Cooperation versus no-cooperation]
\label{thm:ATC_CTA_Ind}
	\hfill\\Let Assumptions \ref{ass:HessianAssumption}-\ref{ass:stationary_distribution} hold. In addition, let Assumption \ref{ass:common_cost} hold so that all nodes have the same risk function. Assume the step-size satisfies condition \eqref{eq:mu_condition}. Consider the ATC, CTA, and the non-cooperative algorithms \eqref{eq:ATC2}-\eqref{eq:no_coop} with $C=I$. Assume the combination matrix $A$ in the ATC and CTA cases is chosen to be doubly-stochastic, meaning that $A^\T \mathds{1}=\mathds{1}$ and $A \mathds{1}=\mathds{1}$. When Assumption \ref{ass:common_cost} holds, the weighting matrix $\mathcal{T}$ in \eqref{eq:T_infty} has the form $\mathcal{T} = \frac{1}{2N} I_N \otimes \nabla^2 J(w^o)$. Under these conditions, the steady-state network excess-risk satisfies:
	\begin{align}
		\mathrm{ER}_{\mathrm{ATC}} \leq \mathrm{ER}_{\mathrm{CTA}} \leq \mathrm{ER}_{\mathrm{ind}}
	\end{align}
	where $\mathrm{ER}_{\mathrm{ATC}}$ is the steady-state excess-risk when the $\mathrm{ATC}$ algorithm is executed, $\mathrm{ER}_{\mathrm{CTA}}$ is the steady-state excess-risk when the $\mathrm{CTA}$ algorithm is executed, and $\mathrm{ER}_{\mathrm{ind}}$ steady-state excess-risk when the nodes do not cooperate with each other.
\end{theorem}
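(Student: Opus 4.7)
My plan is to specialize the steady-state formula of Theorem~\ref{thm:SS_stationary} to each of the three regimes, exploit the common-Hessian assumption to factor the Kronecker structure, and reduce the comparison to a one-line monotonicity estimate on $\|A^j\|_F^2$. Under Assumption~\ref{ass:common_cost} with $C=I$, one has $\nabla^2 J_k(w^o)\equiv H$ for every $k$, so \eqref{eq:D_infty1} collapses to $\mathcal{D}=I_N\otimes H$, the weighting matrix becomes $\mathcal{T}=\tfrac{1}{2N}I_N\otimes H$, and independence of the data across nodes gives $\R_v=I_N\otimes R_v$, where $R_v$ is the single-node gradient-noise covariance at $w^o$. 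Using $(X\otimes Y)(U\otimes V)=XU\otimes YV$ repeatedly, the transition matrices and noise loadings simplify to
\begin{align*}
\mathcal{B}_{\mathrm{ATC}}\;=\;\mathcal{B}_{\mathrm{CTA}}\;&=\;A^{\T}\otimes(I_M-\mu H),\qquad \mathcal{B}_{\mathrm{ind}}\;=\;I_N\otimes(I_M-\mu H),\\
\mathcal{A}_2^{\T}\R_v\mathcal{A}_2\;&=\;\begin{cases}A^{\T}A\otimes R_v,&\text{ATC,}\\ I_N\otimes R_v,&\text{CTA and non-cooperative.}\end{cases}
\end{align*}

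Next I would expand $(I-\F)^{-1}$ as a Neumann series (legitimate because the step-size condition \eqref{eq:mu_condition} forces $\rho(\F)<1$; see App.~C of \cite{Jianshu_common_wo}), so that Theorem~\ref{thm:SS_stationary} rewrites as $\mathrm{ER}\approx\mu^2\sum_{j\ge 0}\Tr\bigl(\mathcal{B}^{j}\mathcal{A}_2^{\T}\R_v\mathcal{A}_2(\mathcal{B}^{\T})^{j}\mathcal{T}\bigr)$. Because every factor in the trace is a Kronecker product of an $N\times N$ block (only in $A$) and an $M\times M$ block (only in $H$ and $R_v$), and because $H$ commutes with $(I_M-\mu H)^{j}$, the trace decomposes as a product of traces. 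Setting the common ``feature'' term $s_j\triangleq\Tr\bigl((I_M-\mu H)^{2j}HR_v\bigr)$ yields the clean expressions
\begin{align*}
\mathrm{ER}_{\mathrm{ATC}}\approx\frac{\mu^{2}}{2N}\sum_{j\ge0}\|A^{j+1}\|_F^{2}\,s_j,\quad \mathrm{ER}_{\mathrm{CTA}}\approx\frac{\mu^{2}}{2N}\sum_{j\ge0}\|A^{j}\|_F^{2}\,s_j,\quad \mathrm{ER}_{\mathrm{ind}}\approx\frac{\mu^{2}}{2N}\sum_{j\ge0}N\,s_j.
\end{align*}

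Finally I would verify the two ingredients for a term-by-term comparison. On the feature side, $(I_M-\mu H)^{2j}\succeq 0$ commutes with $H\succ 0$, so $s_j=\Tr\bigl(H^{1/2}(I_M-\mu H)^{2j}H^{1/2}R_v\bigr)\ge 0$ as the trace of a product of two positive-semidefinite matrices. On the network side, nonnegativity and double-stochasticity of $A$ give $\|A\|_1=\|A\|_\infty=1$, hence $\|A\|_2\le\sqrt{\|A\|_1\|A\|_\infty}=1$ and $AA^{\T}\preceq I_N$. Therefore
\begin{align*}
\|A^{j+1}\|_F^{2}=\Tr\bigl(AA^{\T}\,(A^{\T})^{j}A^{j}\bigr)\le\Tr\bigl((A^{\T})^{j}A^{j}\bigr)=\|A^{j}\|_F^{2},
\end{align*}
and $\|A^{0}\|_F^{2}=N$. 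The chain $\|A^{j+1}\|_F^{2}\le\|A^{j}\|_F^{2}\le N$ combined with $s_j\ge 0$ delivers $\mathrm{ER}_{\mathrm{ATC}}\le\mathrm{ER}_{\mathrm{CTA}}\le\mathrm{ER}_{\mathrm{ind}}$.

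The main obstacle I anticipate is the first step: keeping careful track of the Kronecker structure so that all three excess-risk expressions collapse to $\tfrac{\mu^{2}}{2N}\sum_j t_j^{(\cdot)}s_j$ with a \emph{single} common sequence $s_j$. This decoupling is exactly what Assumption~\ref{ass:common_cost} and the choice $C=I$ are designed to deliver: only under them do $\mathcal{B}_{\mathrm{ATC}}$ and $\mathcal{B}_{\mathrm{CTA}}$ coincide, and only then does the $M$-dimensional contribution reduce to the scalar $s_j$, leaving a clean $\|A^{j}\|_F^{2}$ factor against which double-stochasticity can be brought to bear. Once this decoupling is in place, the sign statement on $s_j$ and the monotonicity estimate on $\|A^j\|_F^2$ are short and close the argument.
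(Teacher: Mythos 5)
Your argument is correct and shares its skeleton with the paper's proof in \ref{app:app4}: both expand $(I-\F)^{-1}$ as a Neumann series, use Assumption \ref{ass:common_cost} to collapse $\B$ to $A^\T\otimes(I_M-\mu D^o)$ so that $\B_{\mathrm{ATC}}=\B_{\mathrm{CTA}}$, and reduce both inequalities to the fact that a doubly-stochastic $A$ satisfies $AA^{\T}\preceq I_N$. The one genuine divergence is your factorization $\R_v=I_N\otimes R_v$. Assumption \ref{ass:common_cost} only asserts that the risk \emph{functions} are identical; it says nothing about the covariance of the gradient-noise vector \eqref{eq:bg_i}, so assuming identical noise covariance at every node together with cross-node independence is an extra hypothesis, and it is exactly what your scalarization needs: if $\R_v$ is merely block-diagonal with node-dependent blocks $R_{v,k}$, then $\mathcal{A}_2^\T\R_v\mathcal{A}_2$ no longer factors as a Kronecker product and the trace does not split into $\|A^j\|_F^2\,s_j$. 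The paper sidesteps this by never scalarizing: it forms the differences $\mathrm{ER}_{\mathrm{ind}}-\mathrm{ER}_{\mathrm{CTA}}$ and $\mathrm{ER}_{\mathrm{CTA}}-\mathrm{ER}_{\mathrm{ATC}}$ directly, writes each as $\sum_{j}\Tr\bigl(P_j\,\mu^2\R_v\bigr)$ with $P_j=(I_N-A^jA^{j\T})\otimes(I_M-\mu D^o)^j S(I_M-\mu D^o)^j$ (respectively with first factor $A^j(I-AA^{\T})A^{j\T}$), and concludes from $P_j\succeq 0$ and $\R_v\succeq 0$ that each term is non-negative --- valid for an arbitrary positive semi-definite noise covariance. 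What your route buys in exchange is an explicit closed-form series $\tfrac{\mu^2}{2N}\sum_j\|A^j\|_F^2 s_j$ that quantifies the cooperation gain, and the monotone chain $\|A^{j+1}\|_F^2\le\|A^j\|_F^2\le N$ is a clean way to read off the ordering. To match the theorem exactly as stated, either add the identical-and-independent noise hypothesis explicitly, or replace the scalarization by the matrix-difference comparison above.
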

\begin{proof}
See \ref{app:app4}.
\end{proof}
From Theorem \ref{thm:ATC_CTA_Ind}, we observe that the Adapt-then-Combine (ATC) algorithm outperforms the Combine-then-Adapt (CTA) strategy, which in turn outperforms the non-cooperative strategy for any doubly-stochastic combination matrix $A$. The reason ATC outperforms CTA is because adaptation precedes combination in ATC so that improved weight estimates are aggregated in the combination step. Nevertheless, as the step-size $\mu$ becomes smaller, then the gap between the ATC and CTA algorithms also becomes smaller (see Fig.~\ref{fig:ER_webspam_large_mu}-\ref{fig:ER_webspam_small_mu} further ahead).

In the next section, we study the performance of the diffusion strategy \eqref{eq:C1}-\eqref{eq:C2} when the optimizer $w^o_i$ is changing according to Assumption \ref{ass:random_walk}. We will establish that the excess-risk is bounded even under this scenario.

\section{Non-Stationary Environments}
\label{sec:Non-Stationary}
In the previous section, we showed that if we use a constant step-size, the mean-square-error and network excess-risk  for Alg.~\ref{alg:diffusion} can be made arbitrarily small by choosing the step-size to be sufficiently small. However, reduction of the excess-risk is not always possible in non-stationary environments. In order to arrive at meaningful bounds for the tracking performance of the algorithm, we will utilize the random-walk model from Assumption \ref{ass:random_walk}.
\begin{theorem}[Asymptotic ER bound for non-stationary data]
\label{thm:tracking}
	\hfill\\Let Assumptions \ref{ass:HessianAssumption}-\ref{ass:noiseModeling} and \ref{ass:random_walk}  hold, and choose a constant step-size that satisfies, as $i\rightarrow\infty$:
	\begin{align}
		0 < \mu < \frac{2 \lambda_{\min} C_*}{\|C\|_1^2(\lambda_{\max}^2 + \alpha)}
    \label{eq:mu_cond_tracking}
	\end{align}
    where $\|C\|_1$ represents the maximum absolute column sum of the matrix $C$, while $C_*$ represents the miminum absolute column sum of the matrix $C$. The asymptotic excess-risk at node $k$ then satisfies:
	\begin{align}
	    \mathrm{ER}_k(i) &\leq \underset{\textrm{Steady-state term}}{\underbrace{\frac{\|C\|_1^2 \sigma_v^2 \lambda_{\max}}{4 \lambda_{\min} C_*} \mu}} + \underset{\textrm{Tracking term}}{\underbrace{\frac{\Tr(Q) \lambda_{\max}}{4 \lambda_{\min} C_*} \mu^{-1}}} + \frac{M \lambda_{\max}}{2} \Tr(Q)
		\label{eq:upper_bound_tracking}
	\end{align}
	for all $k = 1,\ldots,N$. Since all nodes satisfy this bound, the network excess-risk, $\mathrm{ER}(i)$, is also asymptotically bounded by the right-hand-side of \eqref{eq:upper_bound_tracking}.
\end{theorem}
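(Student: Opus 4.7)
The plan is to reduce the excess-risk bound to a statement about the filtering mean-square-error of the diffusion algorithm, and then establish a tracking inequality for that filtering error. The starting point is inequality \eqref{eq:function_diff_bound}, which already gives $\mathrm{ER}_k(i)\leq \tfrac{\lambda_{\max}}{2}\,\mathbb{E}\|\tilde{\w}_{k,i}^p\|^2$. Under Assumption \ref{ass:random_walk}, the prediction-to-filtering relation \eqref{eq:relationship_prediction_filtering} (taking $T=I_M$) converts this into $\mathrm{ER}_k(i)\leq\tfrac{\lambda_{\max}}{2}\,\mathbb{E}\|\tilde{\w}_{k,i-1}^f\|^2 + \tfrac{\lambda_{\max}}{2}\Tr(Q)$, isolating the excess-risk as a bounded function of the filtering MSE plus a rigid drift penalty. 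It remains to show that $\limsup_i\mathbb{E}\|\tilde{\w}_{k,i}^f\|^2$ is itself bounded by the sum of an $O(\mu)$ steady-state piece and an $O(\mu^{-1})$ tracking piece.

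To obtain a recursion for $\mathbb{E}\|\tilde{\w}_{k,i}^f\|^2$, I would subtract $\w^o_i$ from both sides of the diffusion update \eqref{eq:C1}-\eqref{eq:C2}, using $\w^o_i=\w^o_{i-1}+\q_i$ to align iterates across time. Because $A_1$ and $A_2$ are left-stochastic, Jensen's inequality applied to the combination steps yields convex-combination bounds of the form $\|\text{combined error}\|^2\leq\sum_\ell a_{\cdot\ell}\|\text{individual error}\|^2$. The adaptation step introduces the stochastic gradient $\widehat{\nabla} J_{\ell,i-1}$ evaluated at $\phi_{k,i-1}$; expanding it via Assumption \ref{ass:noiseModeling} into a true-gradient part plus noise $\v_{\ell,i-1}$, and then using strong convexity ($\lambda_{\min}$) and smoothness ($\lambda_{\max}$) of each $J_{\ell,i-1}$ together with the mean-value integral identity \eqref{eq:polyak1}, produces a per-node contraction of rate essentially $1-2\mu C_*\lambda_{\min}+\mu^2\|C\|_1^2(\lambda_{\max}^2+\alpha)$. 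The column sums $\|C\|_1$ and $C_*$ appear naturally as the worst- and best-case accumulated gradient weights that each node receives through $C$.

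Collecting the contribution of the gradient noise (bounded by $\|C\|_1^2\sigma_v^2$ once the $C$-weighted sum of independent $\v_{\ell,i-1}$ is taken) together with the random-walk increment (which contributes $\Tr(Q)$ per step once the filtering update is followed by the $\w^o$ drift), the recursion takes the schematic form
\begin{align*}
\mathbb{E}\|\tilde{\w}_{k,i}^f\|^2 \;\leq\; \bigl(1-2\mu C_*\lambda_{\min}+O(\mu^2)\bigr)\,\mathbb{E}\|\tilde{\w}_{k,i-1}^f\|^2 + \mu^2\|C\|_1^2\sigma_v^2 + \Tr(Q).
\end{align*}
Under step-size condition \eqref{eq:mu_cond_tracking}, the contraction factor lies in $(0,1)$, and taking $\limsup_{i\to\infty}$ gives
\begin{align*}
\limsup_{i\to\infty}\mathbb{E}\|\tilde{\w}_{k,i}^f\|^2 \;\leq\; \frac{\mu^2\|C\|_1^2\sigma_v^2+\Tr(Q)}{2\mu C_*\lambda_{\min}} \;=\; \frac{\|C\|_1^2\sigma_v^2}{2\lambda_{\min} C_*}\,\mu + \frac{\Tr(Q)}{2\lambda_{\min} C_*}\,\mu^{-1}.
\end{align*}
Multiplying by $\lambda_{\max}/2$ and adding the rigid $\tfrac{\lambda_{\max}}{2}\Tr(Q)$ drift penalty from the prediction-filtering conversion yields the three-term bound \eqref{eq:upper_bound_tracking}, with the extra dimension factor $M$ arising when one bounds the tracking residual uniformly over all $M$ coordinates of the drift noise $\q_i$ rather than using the sharper trace inequality.

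The main obstacle I expect is securing a clean contraction factor for the filtering MSE in the distributed setting: each node's error depends on its neighbors' errors through the left-stochastic matrices $A_1$ and $A_2$, and on a $C$-weighted sum of gradient noise terms whose variance depends in turn on the iterate itself via the relative component $\alpha\,\mathbb{E}\|w^o_i-\w\|^2$ in Assumption \ref{ass:noiseModeling}. Tracking the worst-case dependence on $C$ is what produces both $\|C\|_1^2$ in the noise term and $C_*$ in the contraction rate, and showing that these two column-sum quantities suffice (rather than more delicate spectral bounds on $C$) is the central technical step. Handling the relative-noise term without creating positive feedback that would destabilize the recursion is what forces the combination $(\lambda_{\max}^2+\alpha)$ to appear inside the step-size upper bound \eqref{eq:mu_cond_tracking}; once that is absorbed, the rest of the argument reduces to completing the geometric series implied by the contraction.
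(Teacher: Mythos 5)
Your proposal follows essentially the same route as the paper's proof: bound the excess-risk by the filtering MSE via the prediction--filtering relation \eqref{eq:relationship_prediction_filtering}, derive the network contraction recursion with rate $\beta = 1-2\mu\lambda_{\min}C_* + \mu^2(\lambda_{\max}^2+\alpha)\|C\|_1^2$ (exactly the paper's Appendix B argument, where the per-node coupling you flag is resolved by taking the $\infty$-norm of the stacked MSE vector and using left-stochasticity of $A_1,A_2$), and sum the geometric series under \eqref{eq:mu_cond_tracking} with the same small-$\mu$ simplification of the denominator. The only deviation is that you apply \eqref{eq:relationship_prediction_filtering} with $T=I_M$ directly after \eqref{eq:function_diff_bound}, which produces the constant drift term $\tfrac{\lambda_{\max}}{2}\Tr(Q)$ and is in fact slightly sharper than the paper's $\tfrac{M\lambda_{\max}}{2}\Tr(Q)$, obtained there by bounding $\Tr(QT_k)$ through a Cauchy--Schwarz chain on the weighted norm.
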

\begin{proof}
To show that the asymptotic excess-risk at node $k$ is bounded, we observe that the excess-risk is asymptotically approximated by the weighted mean-square-error \eqref{eq:WMSE_pred} with weight matrix $T_k$ given in \eqref{eq:T_k_infty}:
\begin{align}
\mathrm{ER}_k(i) &\approx \E\|\tilde{\w}_{k,i}^p\|_{T_k}^2 = \E\|\tilde{\w}_{k,i}^p\|_{\frac{1}{2} \nabla^2_w J_k(\w^o_i)}^2
\end{align}
Using \eqref{eq:relationship_prediction_filtering}, we see that the excess-risk can be written in terms of the filtering error:
\begin{align}
	\mathrm{ER}_k(i) &\leq \E\|\tilde{\w}_{k,i}^f\|_{\frac{1}{2} \nabla^2_w J_k(w^o_i)}^2 + \Tr(Q T_k) \nonumber\\
	&\leq \frac{\lambda_{\max}}{2} \E\|\tilde{\w}_{k,i}^f\|^2 + \Tr(Q T_k)
	\label{eq:ER_filtering}
\end{align}
where $\tilde{\w}_{k,i}^f \triangleq \w^o_i - \w_{k,i}$ and the inequality is a result of Assumption \ref{ass:HessianAssumption}. We can use Assumption \ref{ass:HessianAssumption} to verify that $\Tr(Q T_k)$ is also bounded since:
\begin{align*}
	\Tr(Q T_k) &= \sum_{m=1}^M \sum_{n=1}^M Q_{m n} T_{k,m n} \\
	&\overset{(a)}{\leq} \sqrt{\left(\sum_{m=1}^M \sum_{n=1}^M Q_{m n}^2\right) \left(\sum_{m=1}^M \sum_{n=1}^M T_{k, m n}^2\right)}\\
	&= \sqrt{\Tr\left(Q^2\right) \Tr\left(T_{k}^2\right)}\\
	&\overset{(b)}{=} \sqrt{\Tr(U \Omega^2 U^\T) \Tr(V \Pi^2 V^\T)}\\
	&= \sqrt{\left(\sum_{m=1}^M \omega_m^2\right) \left(\sum_{m=1}^M \pi_m^2\right)}\\
	&\overset{(c)}{\leq} \sqrt{\left(\sum_{m=1}^M \omega_m\right)^2 \left(\sum_{m=1}^M \pi_m\right)^2}\\
	&= \sqrt{\left(\Tr(Q)\right)^2 \left(\Tr(T_k)\right)^2}\\
	&\overset{(d)}{\leq} \frac{M \lambda_{\max}}{2} \Tr(Q) 
\end{align*}
where step $(a)$ is due the Cauchy-Schwarz inequality, step $(b)$ is due to the introduction of the eigenvalue decompositions $Q=U \Omega U^\T$ and $T_k = V \Pi V^\T$, where $\Omega = \textrm{diag}\{\omega_1,\ldots,\omega_M\}$ and $\Pi = \textrm{diag}\{\pi_1,\ldots,\pi_M\}$ are the  non-negative eigenvalues of the symmetric matrices $Q$ and $T_k$, respectively. Step $(c)$ is due to $Q$ and $T_k$ being non-negative definite, and step $(d)$ is due to Assumption \ref{ass:HessianAssumption}. This means that the excess-risk at node $k$ \eqref{eq:ER_filtering} can be upper-bounded by
\begin{align}
\mathrm{ER}_k(i) \leq \frac{\lambda_{\max}}{2} \E\|\tilde{\w}_{k,i}^f\|^2 + \frac{M \lambda_{\max} }{2} \Tr(Q)
\label{eq:ER_filtering2}
\end{align}
To bound the filtering error $\E\|\tilde{\w}_{k,i}^f\|^2$, from \ref{app:proof_convergence}, we have the scalar recursion \eqref{eq:intermediate_rec_ineq}:
\begin{align}
    \|\mathcal{W}_i\|_\infty &\leq \beta^i \|\mathcal{W}_{0}\|_\infty +
							 \left(\|C\|_1^2 \sigma_v^2 \mu^2 + \Tr(Q)\right) \sum_{j=0}^{i-1} \beta^j
    \label{eq:intermediate_rec_ineq1}
\end{align}
where $\|x\|_\infty$ denotes the maximum absolute entry of a vector $x$ and 
\begin{align}
	\mathcal{W}_i &\triangleq \big[\E\|\tilde{\w}_{1,i}^f\|^2,\dots,\E\|\tilde{\w}_{N,i}^f\|^2\big]^\T\\
	\beta &\triangleq 1-2 \mu \lambda_{\min} C_* + \mu^2 (\lambda_{\max}^2 + \alpha) \|C\|_1^2
\end{align}
Notice that when the constant step-size $\mu$ satisfies \eqref{eq:mu_cond_tracking}, we have that $\beta < 1$. Therefore, we can evaluate the limit of the geometric series in the second term of \eqref{eq:intermediate_rec_ineq1} as
\begin{align}
    \lim_{i\rightarrow\infty} \left(\|C\|_1^2 \sigma_v^2 \mu^2 + \Tr(Q)\right) \sum_{j=0}^{i-1} \beta^j = \frac{\|C\|_1^2 \sigma_v^2 \mu^2 + \Tr(Q)}{1-\beta}
\end{align}
Additionally, the limit of the first term on the right-hand-side of \eqref{eq:intermediate_rec_ineq1} will be zero since $\beta < 1$. Therefore, we have that
\begin{align}
    &\limsup_{i\rightarrow\infty} \|\mathcal{W}_i\|_\infty \leq \frac{\|C\|_1^2 \sigma_v^2 \mu^2 + \Tr(Q)}{1-\beta} \nonumber\\
    &= \frac{\|C\|_1^2 \sigma_v^2 \mu}{2 \lambda_{\min} C_* - \mu (\lambda_{\max}^2 \!+\! \alpha)\|C\|_1^2} + \frac{\Tr(Q)}{2 \mu \lambda_{\min} C_* \!-\! \mu^2 (\lambda_{\max}^2 \!+\! \alpha)\|C\|_1^2}
    \label{eq:limsup_ex1}
\end{align}
For sufficiently small step-sizes, the denominator of the first and second terms of \eqref{eq:limsup_ex1} can be respectively approximated by
\begin{align}
    2 \lambda_{\min} C_* - \mu (\lambda_{\max}^2 + \alpha)\|C\|_1^2 &\approx 2 \lambda_{\min} C_*\\
    2 \mu \lambda_{\min} C_* - \mu^2 (\lambda_{\max}^2 + \alpha)\|C\|_1^2 &\approx 2 \mu \lambda_{\min} C_*
\end{align}
Therefore, we conclude that \eqref{eq:limsup_ex1} can be approximated for small step-sizes by
\begin{align}
\limsup_{i\rightarrow\infty} \|\mathcal{W}_i\|_\infty &\leq \frac{\|C\|_1^2 \sigma_v^2 }{2 \lambda_{\min} C_*} \mu + \frac{\Tr(Q)}{2 \lambda_{\min} C_*} \mu^{-1}
\end{align}
Noting the relationship between excess-risk and the mean square error in \eqref{eq:ER_filtering2}, we have that the excess-risk at node $k$ is bounded by
\begin{align}
        \mathrm{ER}_k(i) &\leq \frac{\|C\|_1^2 \sigma_v^2 \lambda_{\max}}{4 \lambda_{\min} C_*} \mu + \frac{\Tr(Q) \lambda_{\max}}{4 \lambda_{\min} C_*} \mu^{-1} + \frac{M \lambda_{\max}}{2} \Tr(Q)
\end{align}
and therefore the network excess-risk $\mathrm{ER}(i)$ satisfies this bound as well for sufficiently large $i$ and small $\mu$.
\end{proof}
Consider the case where $C=I_N$. We observe from \eqref{eq:upper_bound_tracking} that a trade-off exists between the steady-state performance of the algorithm and its tracking performance. The bound consists of the sum of the steady state excess-risk \eqref{eq:RegretBound} derived for stationary environments and a term that depends on $\mu^{-1}$ and which arises as a result of the random-walk model noise $\q_i$. To decrease the steady-state error, we would need to use a smaller step-size, which affects the tracking performance adversely. Figure~\ref{fig:tracking_tradeoff} illustrates this trade-off. In the figure, $\mu^o$ indicates the optimal choice for the step-size in order to minimize the bound on the right-hand-side of \eqref{eq:upper_bound_tracking}. The figure gives insight into the fact that a small step-size will improve the steady-state performance when the environment is stationary, but will harm the tracking ability of the algorithm when the environment is non-stationary. We conclude that the asymptotic network excess-risk \eqref{eq:ER_dist} remains upper-bounded by a constant, even when the optimizer changes according to a random-walk. That is, even as the variance of the random process generating $\w_i^o$ grows indefinitely, the excess-risk at each node remains bounded.

In order to illustrate the application of the result in the context of machine learning, we consider a linear binary classification problem where the task is to find a hyper-plane (through the origin) that best separates features from two classes according to some cost function (such as the logistic regression cost in \eqref{eq:log-loss}). Since the hyper-plane is fixed at the origin, the task is to find the best rotation of the hyper-plane to separate the data. Consider now that the distribution from which the feature vectors arise is time varying and as a result the optimal hyper-plane must rotate accordingly --- see Fig.~\ref{fig:rotating_hyperplane}. Our analysis shows that the diffusion algorithm can track the random-walk rotating hyper-plane proposed in \cite{Jenn2010} and remain within a constant excess-risk on average for any strongly-convex cost function used that satisfies Assumption \ref{ass:HessianAssumption}.
\begin{figure}
\centering
	\includegraphics[width=0.5\textwidth]{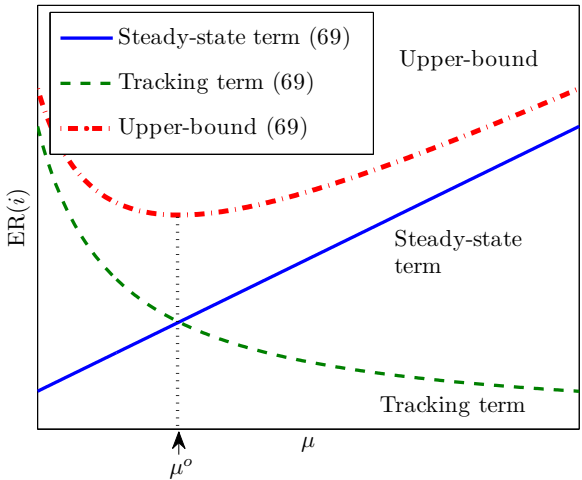}
	\caption{Trade-off between tracking performance and steady-state excess-risk. The scalar $\mu^o$ indicates the optimal choice for the step-size in order to minimize the bound on the excess-risk.}
	\label{fig:tracking_tradeoff}
\end{figure}
\begin{figure*}
	\centering
		\includegraphics[width=0.65\textwidth]{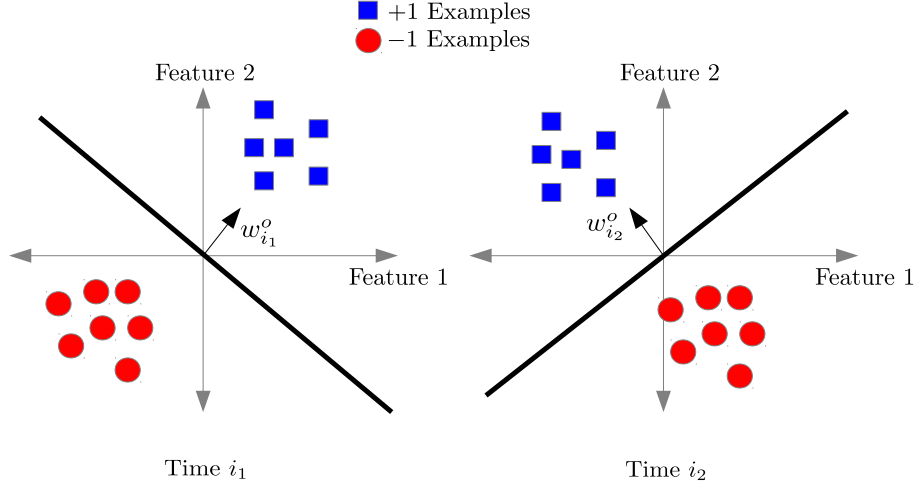}
		\caption{A rotating hyper-plane in 2D that adjusts to separate data from two classes $\{+1,-1\}$. $w^o_i$ indicates the optimal normal vector of the hyper-plane.}
		\label{fig:rotating_hyperplane}
\end{figure*}
\section{Simulation Results}
	\subsection{Stationary Environments}
	   In this section, we test the distributed diffusion strategy \eqref{eq:C1}-\eqref{eq:C2} on three stationary datasets:
        \begin{itemize}
            \item The `alpha' dataset \cite{alphaDataset}.
            \item The `a9a' dataset \cite{covtype}.
            \item The `webspam' (unigram) dataset \cite{covtype}.
        \end{itemize}
       Each set deals with a binary classification problem. The dataset properties are compiled in Table \ref{tbl:datasets_stationary}. We split the data evenly across the nodes with the step-size chosen so that it is possible to observe the steady-state behavior. Unfortunately, since some of the datasets are relatively small (once divided over the nodes), this means that the step-size chosen needs to be relatively large. The analysis we have for the approximate steady-state expression in Theorem \ref{thm:SS_stationary} assumes the use of small step-sizes, so we expect to see a better match between theory and simulation if the data sets were larger and the step-sizes were smaller---see Figs.~\ref{fig:ER_webspam_large_mu}-\ref{fig:ER_webspam_small_mu} further ahead. Better matches will occur when smaller step-sizes are used \cite{Jianshu_common_wo,Xiaochuan}.
       \begin{table*}
            \begin{centering}
            \caption{Properties of datasets used for performance evaluation and the problem parameters associated with the datasets.}
            \label{tbl:datasets_stationary}
            \begin{tabular}{|c|c|c|c|c|c|c|}
            \hline
            Dataset & Instances & Attributes ($M$) & $\rho$ & $\mu$ & $N$ & Experiments\tabularnewline
            \hline
            \hline
            \textbf{alpha} & 500000 & 500 & 5 & 0.0001 & 20 & 20\tabularnewline
            \hline
            \textbf{a9a} & 32561 & 123 & 5 & 0.02 & 8 & 100\tabularnewline
            \hline
            \textbf{webspam} & 350000 & 254 & 5 & 0.0025/0.001 & 40 & 50\tabularnewline
            \hline
            \end{tabular}
            \par\end{centering}

        \end{table*}
        \begin{figure*}[H]
        \end{figure*}
        We perform regularized logistic regression \eqref{eq:log-loss} on the dataset in real-time and evaluate the network excess-risk defined in \eqref{eq:ER_dist} using the ATC, CTA, and the non-cooperative algorithms described by \eqref{eq:ATC2}, \eqref{eq:CTA_C=I}, and \eqref{eq:no_coop}, respectively. For the ATC and CTA algorithms, we set the gradient combination matrix $C=I_N$ so that the nodes do not exchange their gradient vectors. In addition, we compare the performance of our algorithm to the centralized full gradient (CFG) algorithm that has access to all data samples from all $N$ nodes at every iteration:
        \begin{align}
            \w_{\textrm{CFG},i} = \w_{\textrm{CFG},i-1} - \frac{\mu}{N} \sum_{k=1}^N \widehat{\nabla}_w J_{k}(w_{\textrm{CFG},i-1}) \quad\quad\textrm{(CFG)}
            \label{eq:CFG}
        \end{align}
        The CFG algorithm averages the gradients from all nodes and moves against the average gradient direction. We also compare against the semi-distributed algorithm from \cite{zinkevich_parallelized} where each node executes stochastic gradient descent up to some time horizon $i$ and then the nodes  transmit their estimates $\w_{k,i}$ to a central processor that averages all estimates:
        \begin{align}
            \w_{\textrm{THA},k,i} \triangleq \frac{1}{N} \sum_{k=1}^N \w_{k,i}\quad\quad\textrm{(time-horizon averaging)}
            \label{eq:zink}
        \end{align}
        Notice that \eqref{eq:zink} requires some time horizon $i$ to be known and requires either some central server to average the estimates and redistribute the average \eqref{eq:zink} back to the nodes or the use of some iterative consensus scheme \cite{DeGroot}. In order to compare our algorithm to that of \cite{zinkevich_parallelized}, we assume that the averaging occurs at every step of the algorithm (we only evaluate the excess-risk at the central processor, and do not communicate the average back to the nodes since the nodes' iterations do not depend on the averaged estimates). Finally, we also simulate algorithm \eqref{eq:consensus} from \cite{yan} using a constant step-size. The same step-size is used for all algorithms. For the combination matrix $A$, we utilize the Metropolis rule \cite{federico} to generate the coefficients:
        \begin{align}
        	a_{\ell k} = \begin{cases}
        			\min\left(\frac{1}{|\mathcal{N}_\ell| -1}, \frac{1}{|\mathcal{N}_k|-1}\right), & \ell \in \mathcal{N}_k, \ell \neq k\\
        			1 - \sum_{j = 1}^N a_{j k},& \ell = k\\
        			0, & \textrm{otherwise}
        	\end{cases}
        	\label{eq:Metropolis}
        \end{align}
        The Metropolis weighting matrix $A$ generated using \eqref{eq:Metropolis} is doubly stochastic. The loss function that each node utilizes is the regularized log-loss:
        \begin{align}
            Q(w,\h_i,\y_i) &\triangleq \frac{\rho}{2} \|w\|^2 + \log(1+e^{-\y_i \h_i^\T w})
            \label{eq:Q_sim}
        \end{align}
        where $\h_i$ indicates the feature vector and $\y_i$ indicates the true label ($\pm 1$). In this case, the data $\x_{k,i}$ in \eqref{eq:J_k_def} are defined as $\x_{k,i} \triangleq \{\h_{k,i},\y_{k,i}\}$. The risk function is the expectation of $Q(\cdot)$ over the inputs $\h_i$ and $\y_i$. In each experiment, a number $N$ of nodes are used to distribute the classifier learning task as listed in Table \ref{tbl:datasets_stationary}. A batch optimization, where all samples from the full dataset are available to the learner, was used in order to compute $w^o$. This optimization was conducted using the \texttt{LIBLINEAR} \cite{liblinear} library. The theoretical curves are computed using the simplified expressions derived in \cite{Xiaochuan,MLSP2}:
\begin{align}
	\mathrm{ER}_k(i) \approx \frac{\mu \Tr(R_{v,k})}{4 N}
\end{align}        
where $R_{v,k} \triangleq \E\{\v_{k,i}(w^o) \v_{k,i}(w^o)^\T\}$. Fig.~\ref{fig:Stationary} shows the excess-risk learning curves for the different algorithms and different datasets. We observe that the ATC algorithm outperforms the CTA algorithm and the non-cooperative algorithm (as established by Theorem \ref{thm:ATC_CTA_Ind}) as well as the consensus-type algorithm \eqref{eq:consensus} from \cite{yan} when the same constant step-size is used. We also observe from Figs.~\ref{fig:ER_webspam_large_mu} and \ref{fig:ER_webspam_small_mu} that as the step-size decreases, the excess-risk also decreases. This fact is in agreement with our analysis in Theorem \ref{thm:convergence}. We notice that the time-horizon averaging algorithm from \cite{zinkevich_parallelized} is close in performance to the ATC diffusion algorithm. The algorithm from \cite{zinkevich_parallelized}, however, requires global communication at every iteration and is not a distributed solution as is the case with diffusion strategies.

\begin{figure*}
    \centering
    \subfloat[][Excess-risk for `alpha' dataset]{
    	    \includegraphics[width=0.6\textwidth]{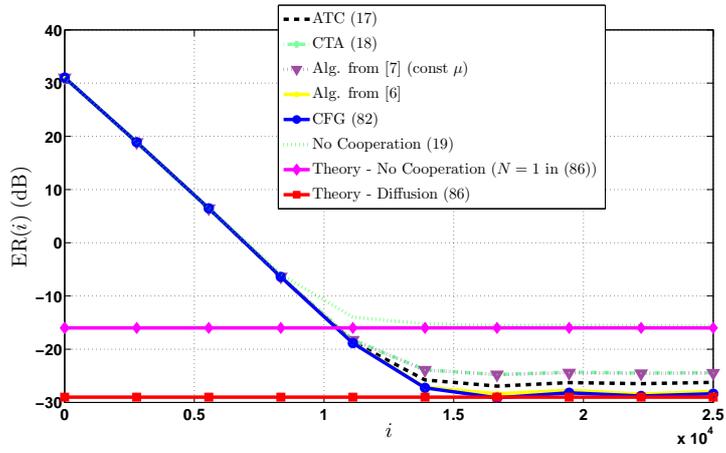}
    	    \label{fig:ER_alpha}
    	}
    \\
    \subfloat[][Excess-risk for `a9a' dataset]{
    	    \includegraphics[width=0.6\textwidth]{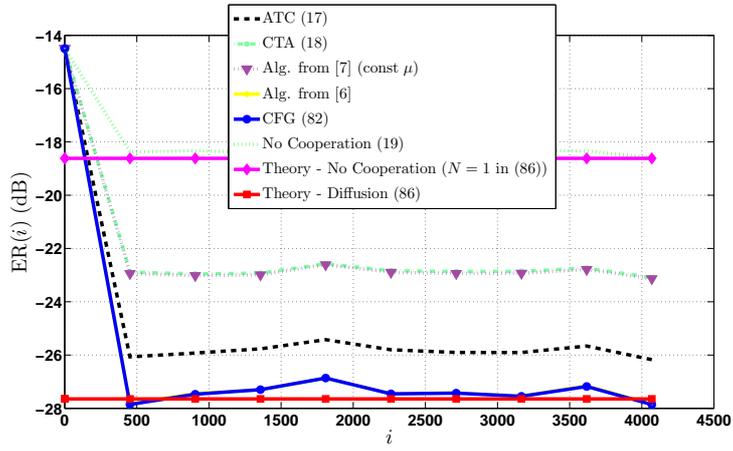}
    	    \label{fig:ER_a9a}
    	}\\
    	\subfloat[][Excess-risk for `webspam' dataset ($\mu = 0.0025$)]{
    	    \includegraphics[width=0.6\textwidth]{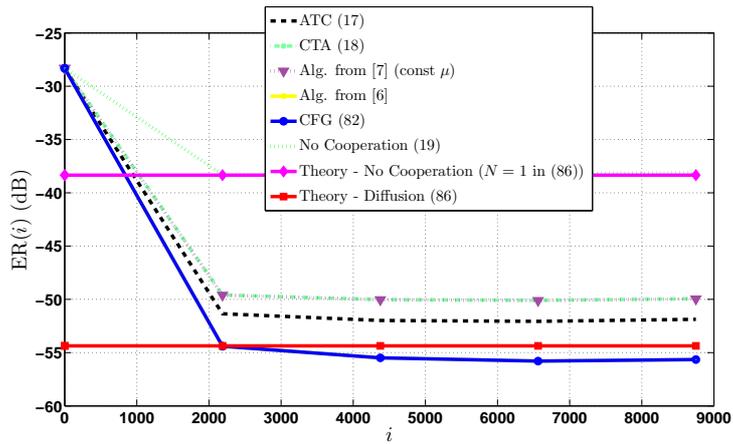}
    	    \label{fig:ER_webspam_large_mu}
    	}
    \caption{Excess-risk learning curves for different stationary datasets (continued on the next page).}
\end{figure*}
\begin{figure*}
    \ContinuedFloat
    \centering
    \subfloat[][Excess-risk for `webspam' dataset ($\mu = 0.001$)]{
    	    \includegraphics[width=0.6\textwidth]{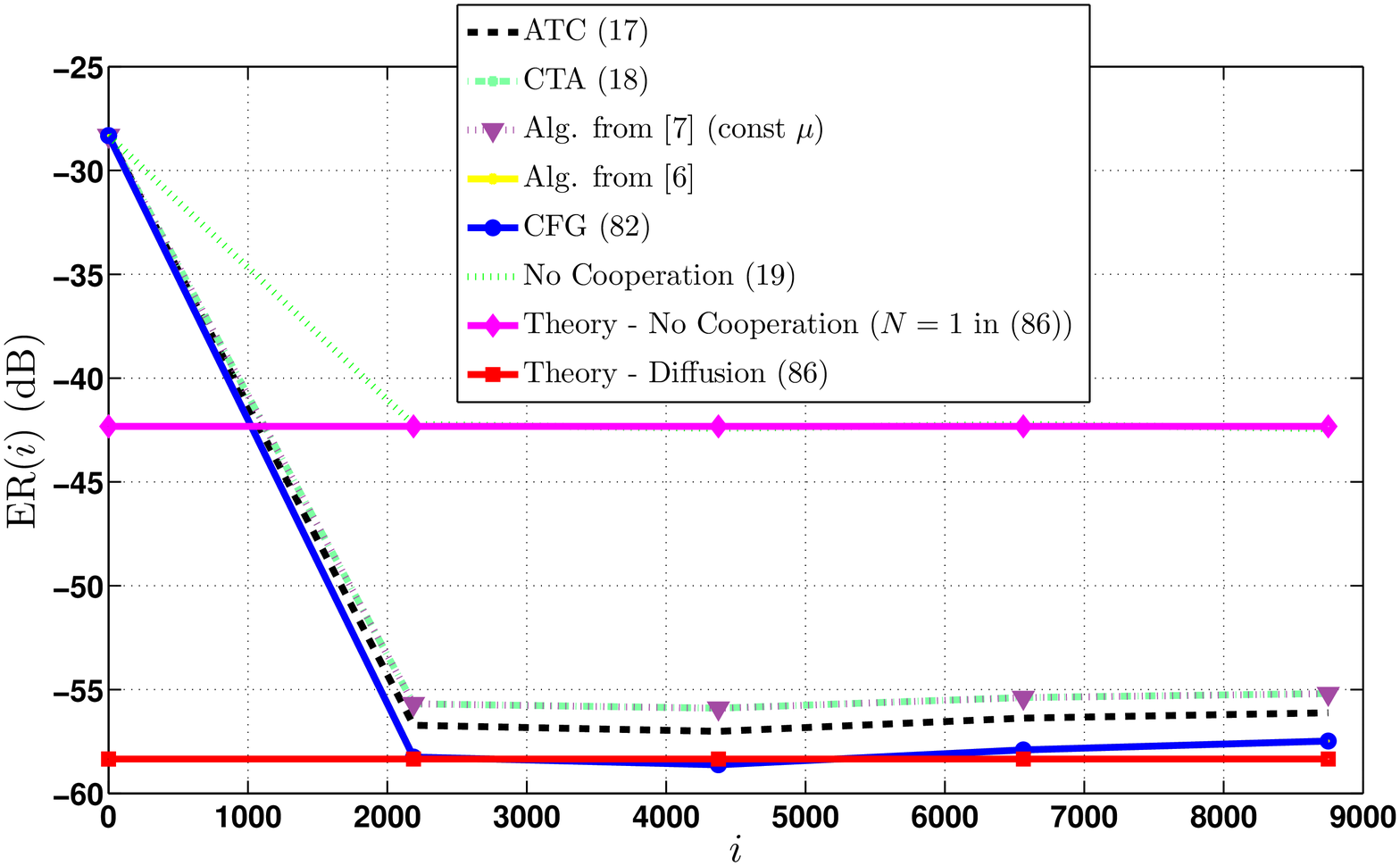}
    	    \label{fig:ER_webspam_small_mu}
    	}
    \caption{Excess-risk learning curves for different stationary datasets (continued from the previous page).}
    \label{fig:Stationary}
\end{figure*}

        In order to evaluate the performance of the actual classifier output by the algorithms, we plot the receiver operating characteristic (ROC) curves in Fig.~\ref{fig:Stationary_ROC}. The classifier for each of the algorithms is computed using:
        \begin{align}
            \hat{y}_i \triangleq \mathrm{sign}(h_i^\T w - b)
            \label{eq:classifier}
        \end{align}
        by sweeping the bias $b$. In Fig.~\ref{fig:Stationary_ROC}, $P_{\mathrm{D}}$ indicates the probability of detection while $P_{\mathrm{FA}}$ indicates the probability of false alarm. Notice that the curve for the ATC algorithm is very close to that of the CFG algorithm and the algorithm from \cite{zinkevich_parallelized} while the ATC algorithm is \emph{fully} distributed. The CTA and consensus algorithm from \cite{yan} perform worse than the ATC algorithm. We also see a clear performance improvement over the non-cooperative algorithm. Finally, as the step-size decreases for the `webspam' dataset, we see that the diffusion algorithm tends to improve in performance and get closer to the centralized batch processing solution. The batch processing curve is computed by using $w^o$ as the separating hyperplane in \eqref{eq:classifier}.
\begin{figure*}
    \centering
        \subfloat[][ROC curve for `alpha' dataset]{
        	    \includegraphics[width=0.45\textwidth]{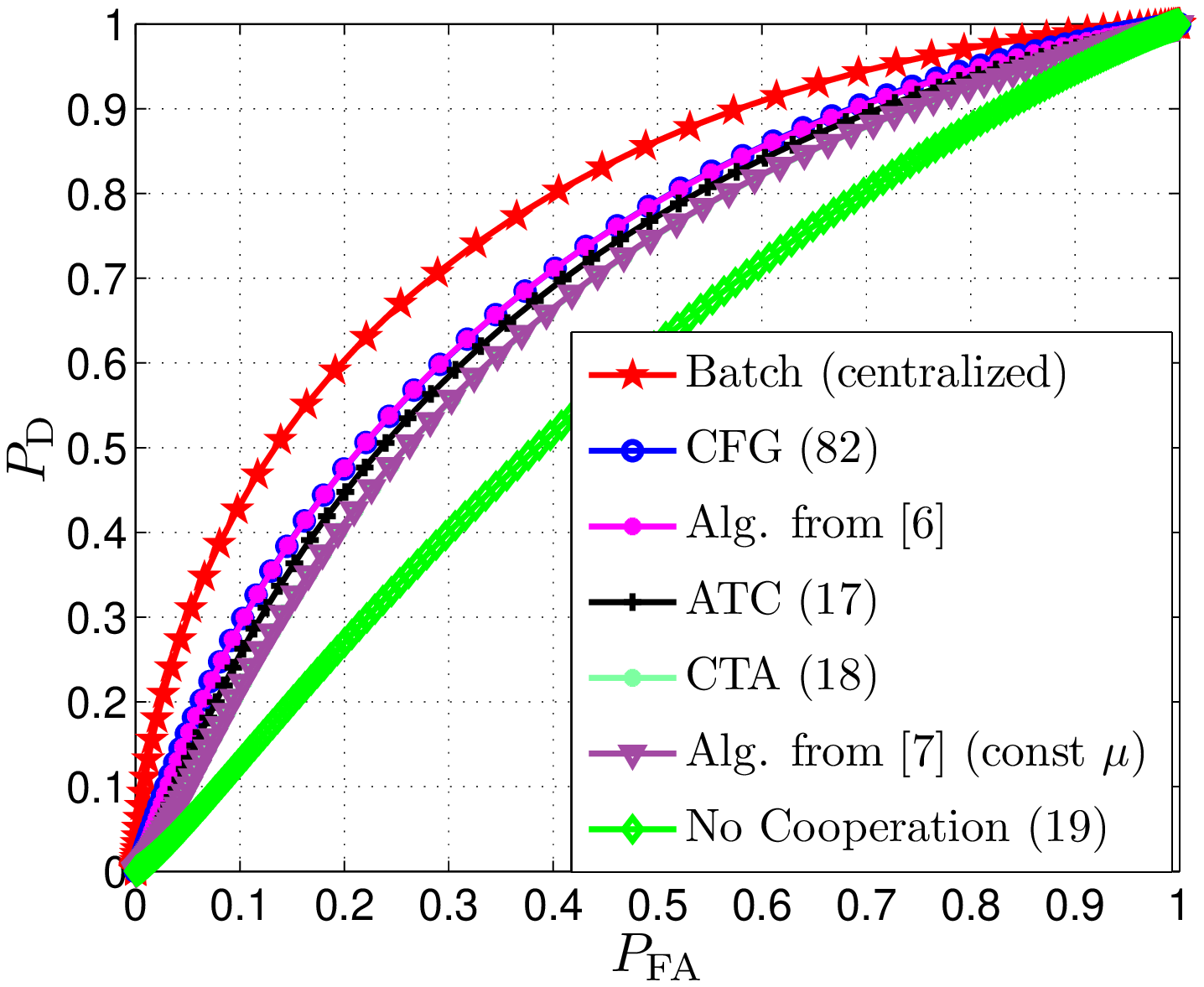}
        	    \label{fig:ROC_alpha}
        	}
        \quad
        \subfloat[][ROC curve for `a9a' dataset]{
        	    \includegraphics[width=0.45\textwidth]{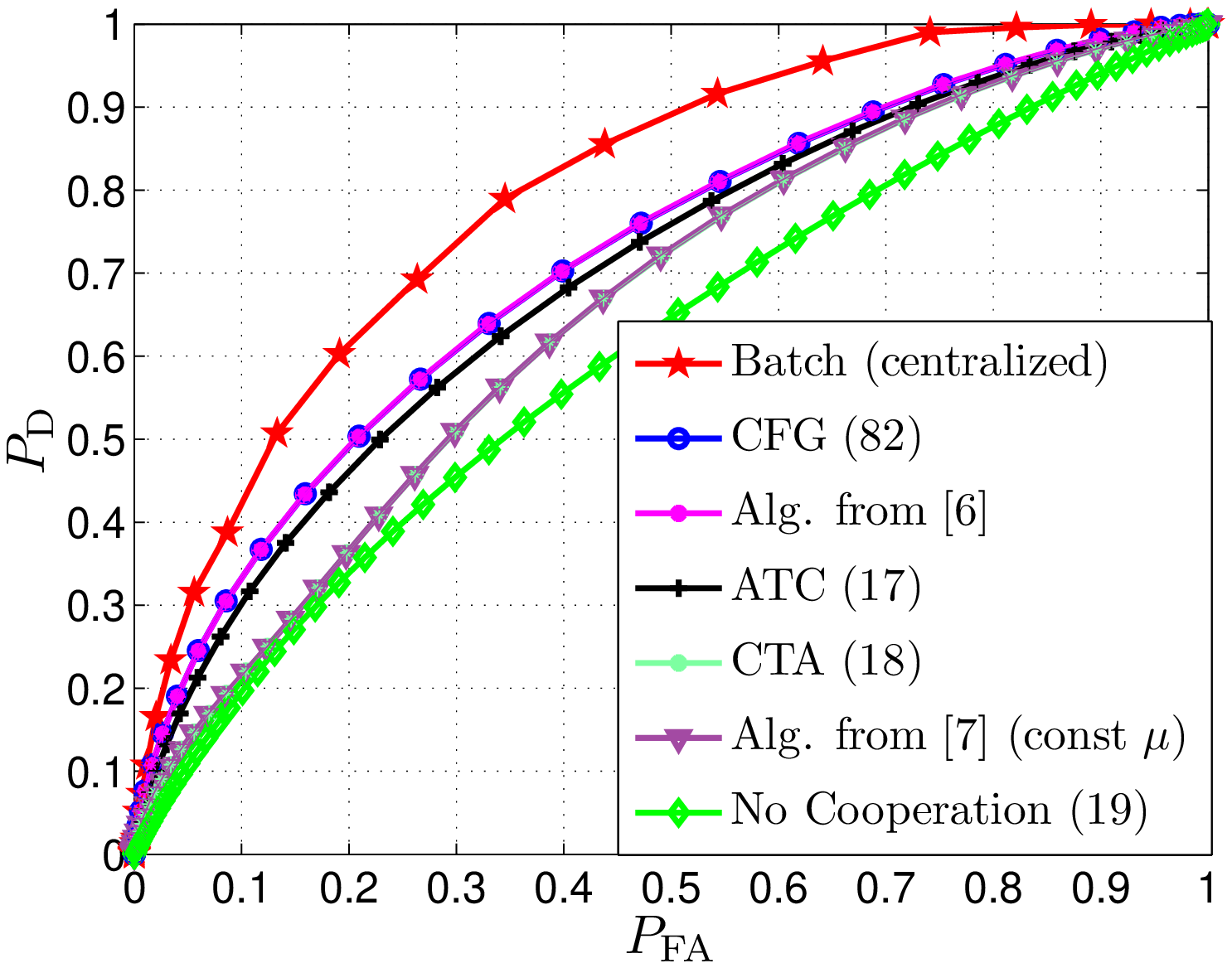}
        	    \label{fig:ROC_a9a}
        }\\
        \subfloat[][ROC curve for `webspam' dataset ($\mu = 0.0025$)]{
    	    \includegraphics[width=0.45\textwidth]{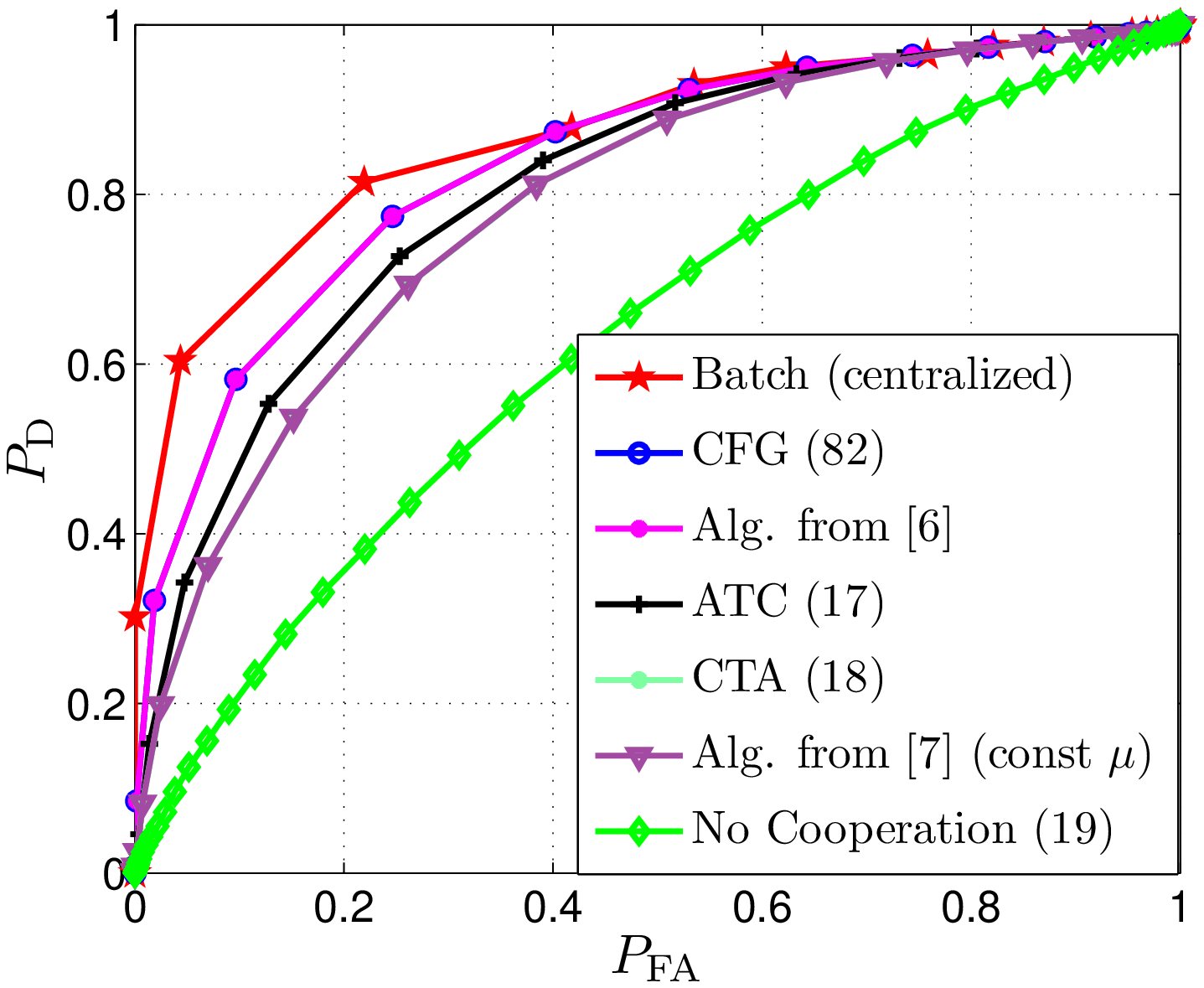}
    	    \label{fig:ROC_webspam_large_mu}
    		}
        \quad
        \subfloat[][ROC curve for `webspam' dataset ($\mu = 0.001$)]{
    	    \includegraphics[width=0.45\textwidth]{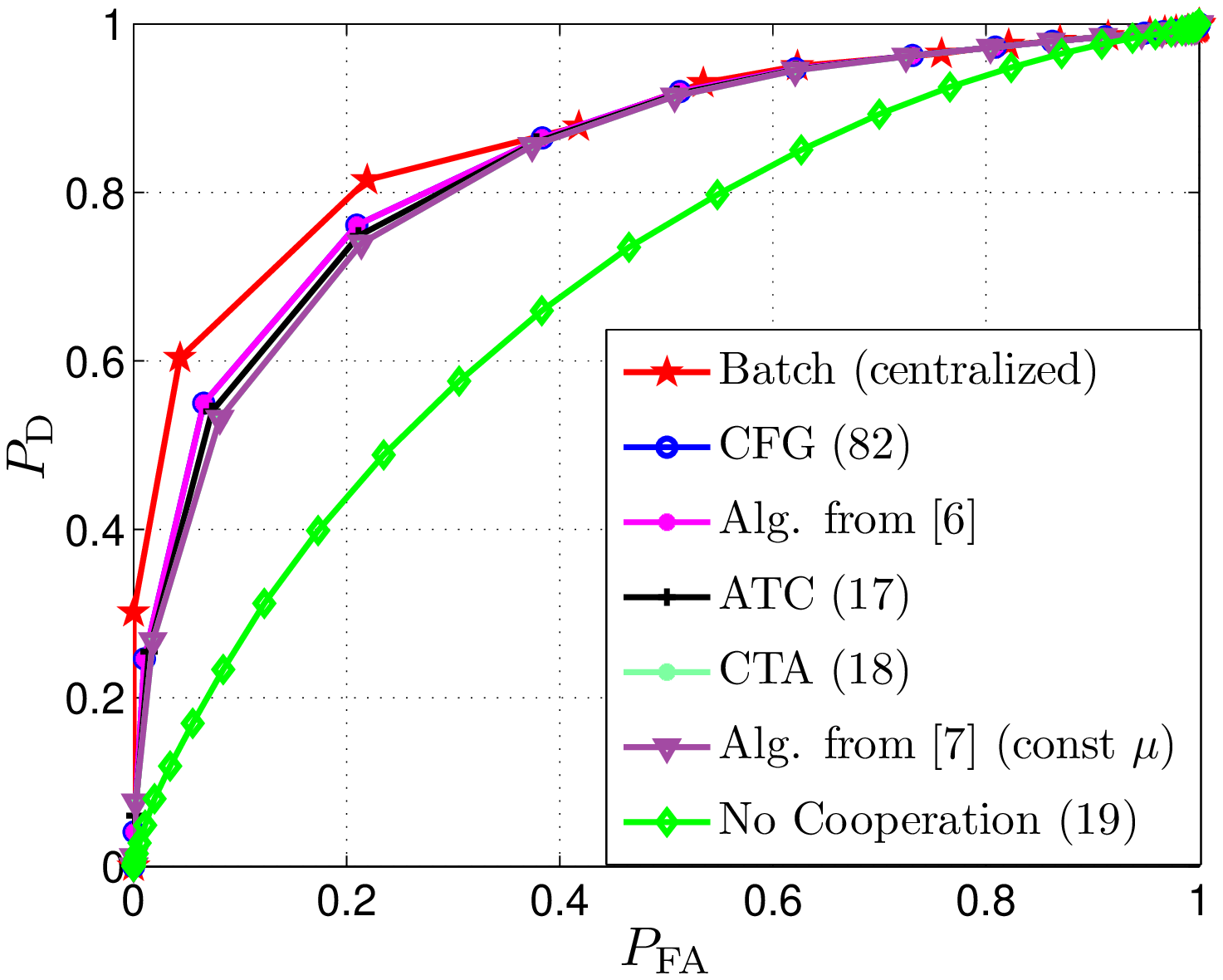}
    	    \label{fig:ROC_webspam_small_mu}
    	}
    \caption{ROC curves for different stationary datasets.}
	\label{fig:Stationary_ROC}
\end{figure*}

\subsection{Non-Stationary Environments}
    \subsubsection{Random Walk Rotating Hyperplane - Gradual Concept Drift}
    In this section, we simulate a scenario where $\w^o_i$ is a random walk. We do so to illustrate the analysis in Theorem \ref{thm:tracking} and to simulate the behavior of the algorithms under gradual concept drifts. In the next section, we will simulate instant concept drifts. In order to clarify the presentation of the results, we concentrate in this section on the ATC algorithm and the algorithm from \cite{yan} only since we have already established in the last section that the ATC algorithm outperforms CTA and non-cooperation. We study the algorithm from \cite{yan} when the step-size decays with time. This allows us to highlight the importance of utilizing constant step-sizes in non-stationary environments.
    We generate data for two classes $\{+1,-1\}$ with Gaussian distributions $\mathcal{N}(\m_i,I_2)$ and $\mathcal{N}(-\m_i,I_2)$ respectively where $\m_i$ is the mean of the $+1$ distribution at time $i$. We let $\m_i$ be a random walk with increments that are Gaussian with zero mean and covariance $0.01 I_2$. We compute $w^o_i$ at every iteration based on all the data in the network using the \texttt{LIBLINEAR} library \cite{liblinear}. Each of the $N=200$ nodes receives one sample per iteration. The Metropolis weights \eqref{eq:Metropolis} are used to combine the estimates for the ATC algorithm and the algorithm from \cite{yan}. An amount of $10\%$ label noise was also added to the dataset. We set the step-size to $\mu = 0.005$ and $\rho = 0.01$ for the loss function in \eqref{eq:Q_sim}. We use the classifier in \eqref{eq:classifier} to obtain the classifier accuracy in Fig.~\ref{fig:accuracy_markov}, which is defined as:
\begin{align}
	\textrm{Accuracy} = \frac{\textrm{Number of correctly classified samples}}{\textrm{Total number of samples}}
\end{align}
In addition, we plot the excess-risk in Fig.~\ref{fig:ER_markov}. We observe that as the target $w^o_i$ changes, the diminishing step-size algorithm from \cite{yan} does not cope with non-stationarity. On the other hand, and as predicted by Theorem \ref{thm:tracking}, the constant step-size algorithm can track these changes.
     \begin{figure*}
	    \centering
	    \subfloat[][Accuracy for the Markov random walk concept drift across time. Larger values are better.]{
	    	    \includegraphics[width=0.6\textwidth]{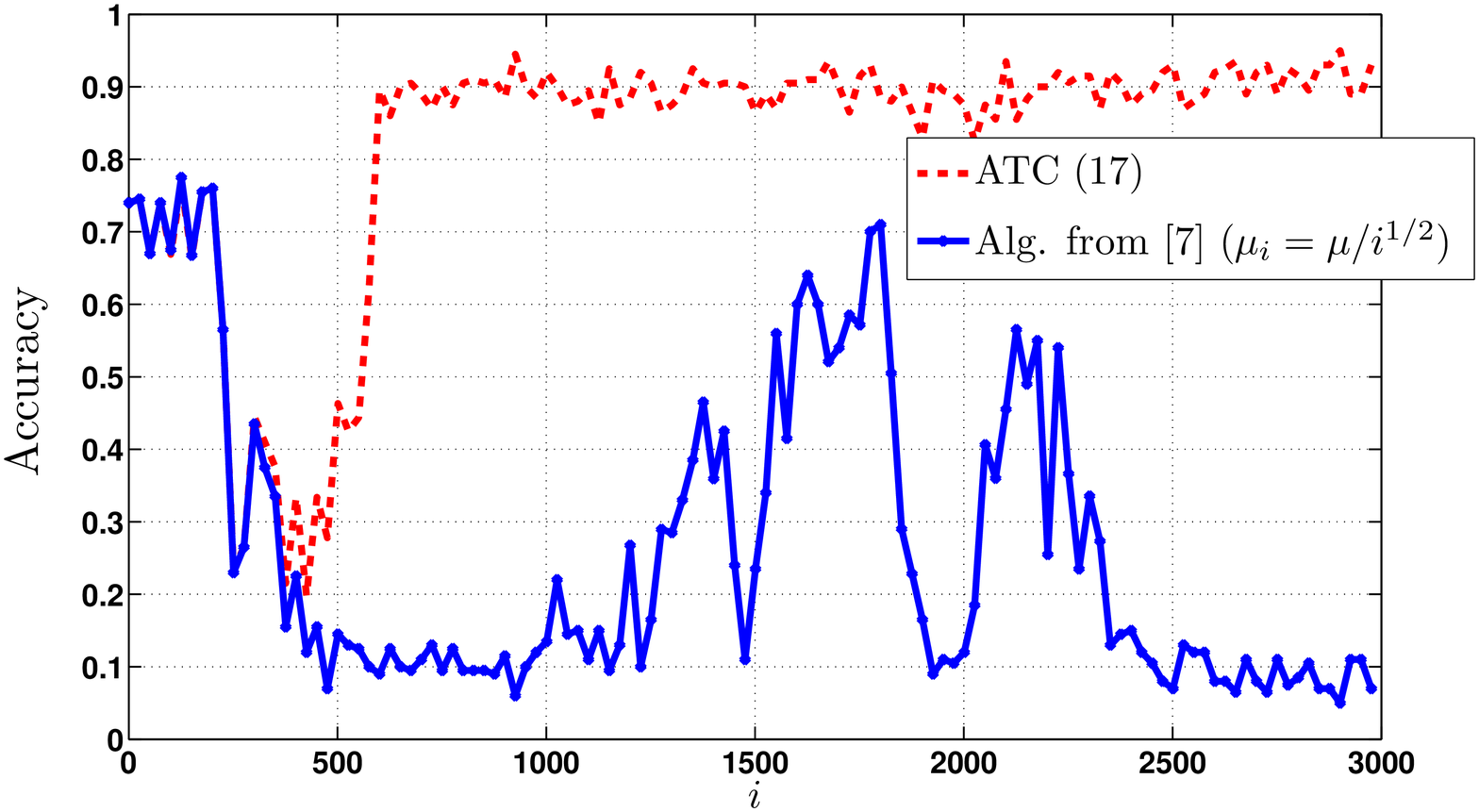}
	    	     \label{fig:accuracy_markov}
	    	}\\
	    \subfloat[][Excess-risk for the Markov random walk concept drift over time. Smaller values are better.]{
	    	    \includegraphics[width=0.6\textwidth]{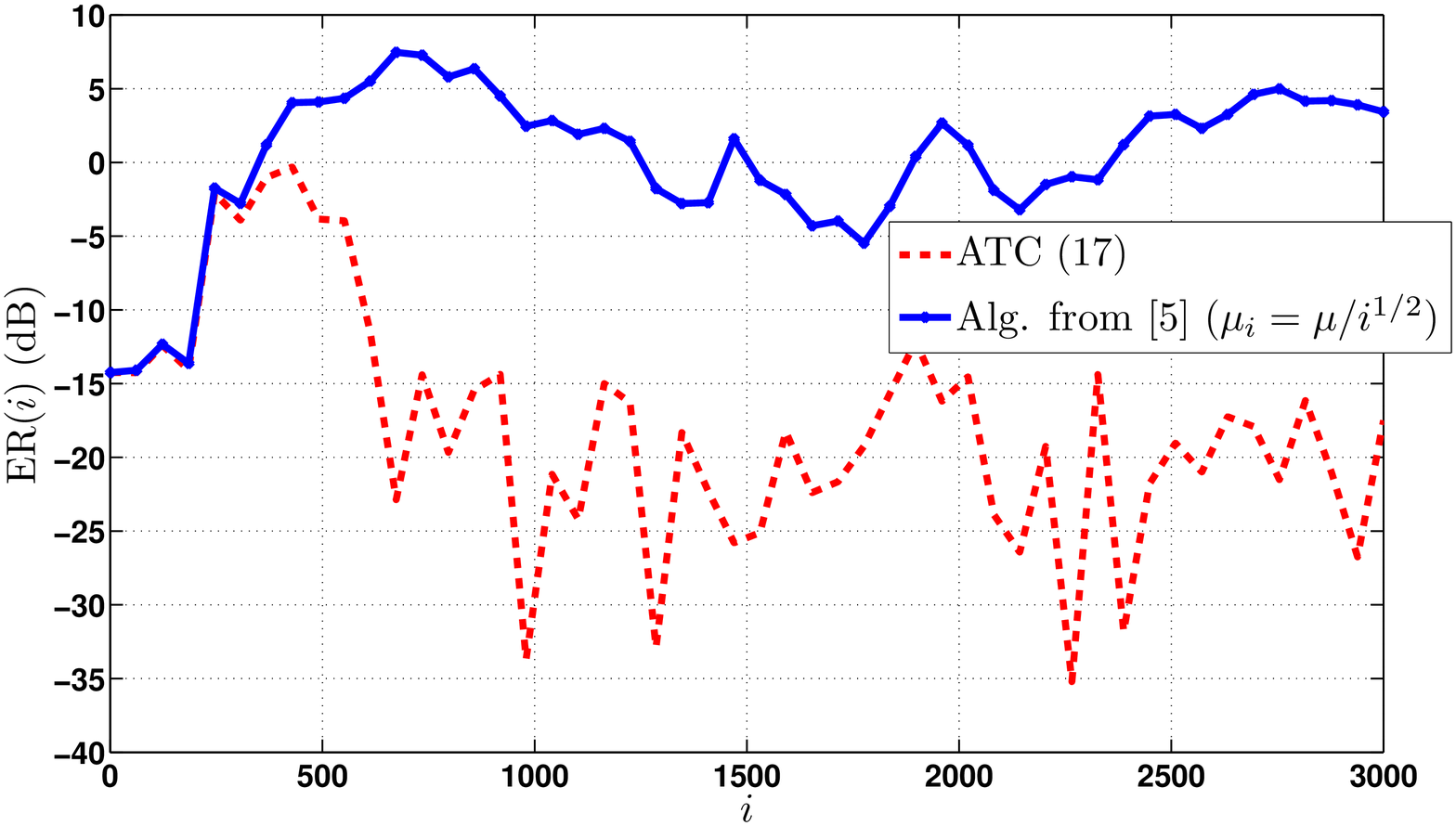}
	    	     \label{fig:ER_markov}
	    	}
	    	\caption{Results for Markov random walk simulation.}
	   		\label{fig:Markov_figs}
	\end{figure*}
    \subsubsection{\texttt{STAGGER} Concepts - Instantaneous Concept Drift}
    In addition to the gradual concept drift simulation in the last section, we also simulate our algorithm on a dataset with instantaneous concept drift. We use the \texttt{STAGGER} dataset \cite{STAGGER,DWM} for this purpose. We simulate a network with $N=125$ nodes. All the nodes experience the concept change simultaneously. As in \cite{DWM}, we define the target concept to be changing over $120$ iterations, in intervals of 40 iterations for each target concept:
    \begin{align}
        y_i \triangleq \begin{cases}
                            (h_{i,1} = 1) \textrm{\ and\ } (h_{i,3} = 0), & 1\leq i \leq 40\\
                            (h_{i,1} = 0) \textrm{\ or\ } (h_{i,2} = 0.5), & 41\leq i \leq 80\\
                            (h_{i,3} = 0.5) \textrm{\ or\ } (h_{i,3} = 1), & 81\leq i \leq 120
                        \end{cases}
    \end{align}
    The labels are then mapped from $\{0,+1\}$ to $\{-1,+1\}$. The above rule can be seen as a numerical representation of the color, shape, and size attributes through the definitions in Table \ref{tbl:STAGGER}.
        
		\begin{table*}
        \caption{Numerical Representation of STAGGER concepts}
        \begin{centering}{\footnotesize{
\begin{tabular}{|c|c|c|c|c|c|c|c|c|c|}
        \hline
        Attribute & \multicolumn{3}{c|}{Color ($x_{i,1}$)} & \multicolumn{3}{c|}{Shape ($x_{i,2}$)} & \multicolumn{3}{c|}{Size ($x_{i,3}$)}\tabularnewline
        \hline
        \hline
        Value & Green & Blue & Red & Triangle & Circle & Rectangle & Small & Medium & Large\tabularnewline
        \hline
        Numerical Representation & 0 & 0.5 & 1 & 0 & 0.5 & 1 & 0 & 0.5 & 1\tabularnewline
        \hline
        \end{tabular}}}
        \par\end{centering}
        \label{tbl:STAGGER}
        \end{table*}    
    An amount of $10\%$ label noise was also added to the dataset at each experiment. The simulation results were averaged over $100$ experiments. A regularization factor of $\rho = 0.1$ was used to optimize the log-loss in \eqref{eq:Q_sim}. The batch optimization was carried out using the \texttt{LIBLINEAR} library \cite{liblinear}. A step-size of $\mu = 0.25$ was used to simulate the constant step-size algorithms (ATC, CTA, non-cooperative, \eqref{eq:consensus}, \eqref{eq:CFG}, and \cite{zinkevich_parallelized}). In addition, we simulate the algorithm from \cite{yan} with a diminishing step-size $\mu_i \triangleq \mu/\sqrt{i}$ to illustrate the necessity of constant step-sizes for non-stationary environments. Figure~\ref{fig:ER_STAGGER} shows the excess-risk performance of the different algorithms on the \texttt{STAGGER} concepts. The constant step-size algorithms continuously track the changing target concept while the diminishing step-size algorithm from \cite{yan} fails to do so due to the diminishing learning rate. Observe that the algorithm from \cite{zinkevich_parallelized} would not know when the concept changed and it would have to implement a change detector in order to allow the central node to poll the information from all the nodes (or to initiate consensus iterations). We also evaluate the ROC curves using \eqref{eq:classifier} associated with the classifier at the last iteration of the target concept. The ROC curves are illustrated in Fig.~\ref{fig:ROC_STAGGER}. The diminishing step-size algorithm is not helpful in detecting the second concept since it is below the chance line ($P_{\mathrm{D}} = P_{\mathrm{FA}}$). In addition, we still notice that the ATC algorithm outperforms the other fully distributed approaches (non-cooperative, CTA, and \eqref{eq:consensus}) and is close to the batch solution. Metropolis weights \eqref{eq:Metropolis} are used for the combination matrix for the distributed algorithms.
    \begin{figure*}
    	\centering
    	\subfloat[][Excess-risk performance of the algorithms on the time-varying \texttt{STAGGER} concepts.]{
	    	    \includegraphics[width=0.6\textwidth]{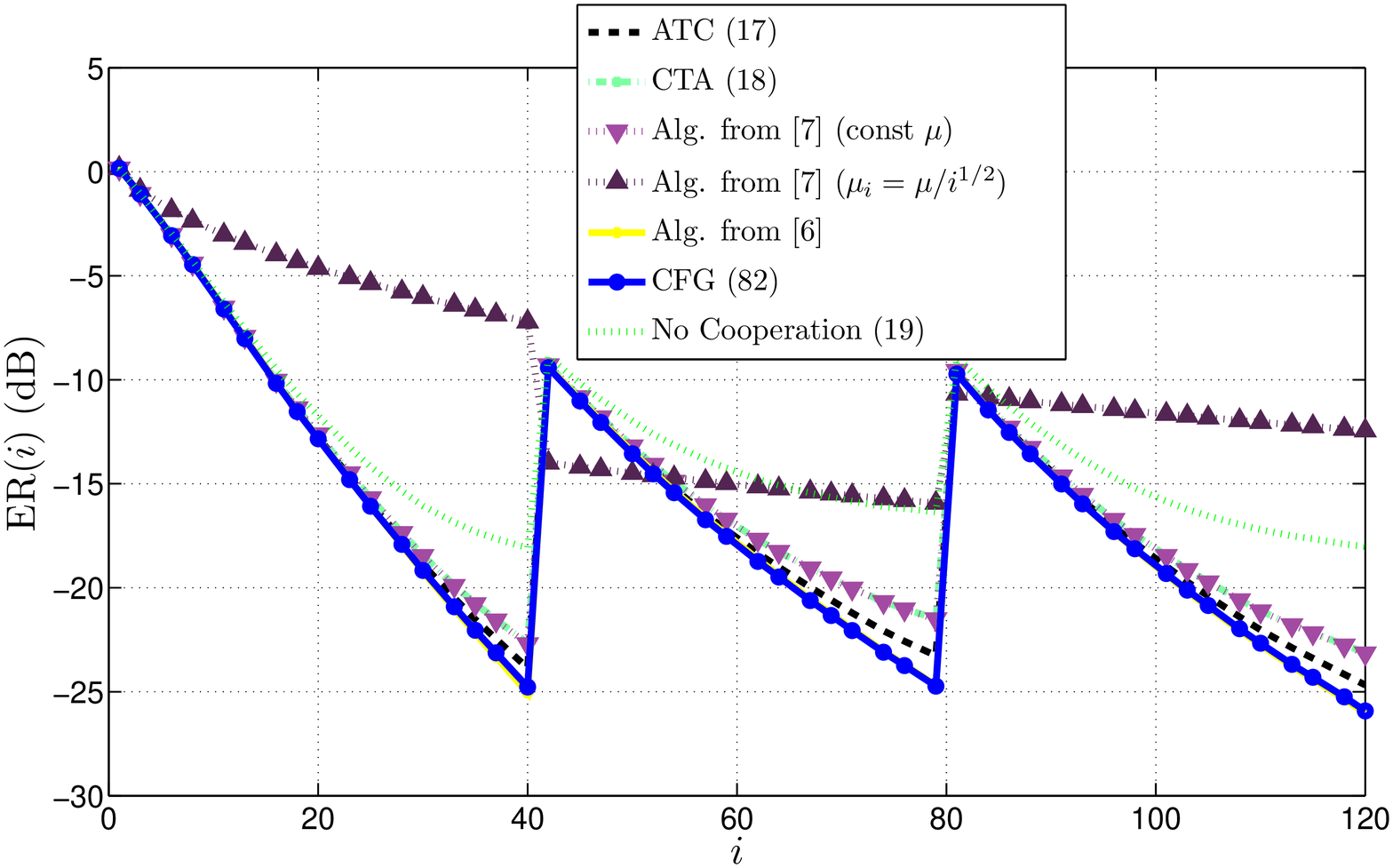}
	    	     \label{fig:ER_STAGGER}
	    	}\\
	    \subfloat[][ROC curves for the three \texttt{STAGGER} concepts.]{
	    		\includegraphics[width=1\textwidth]{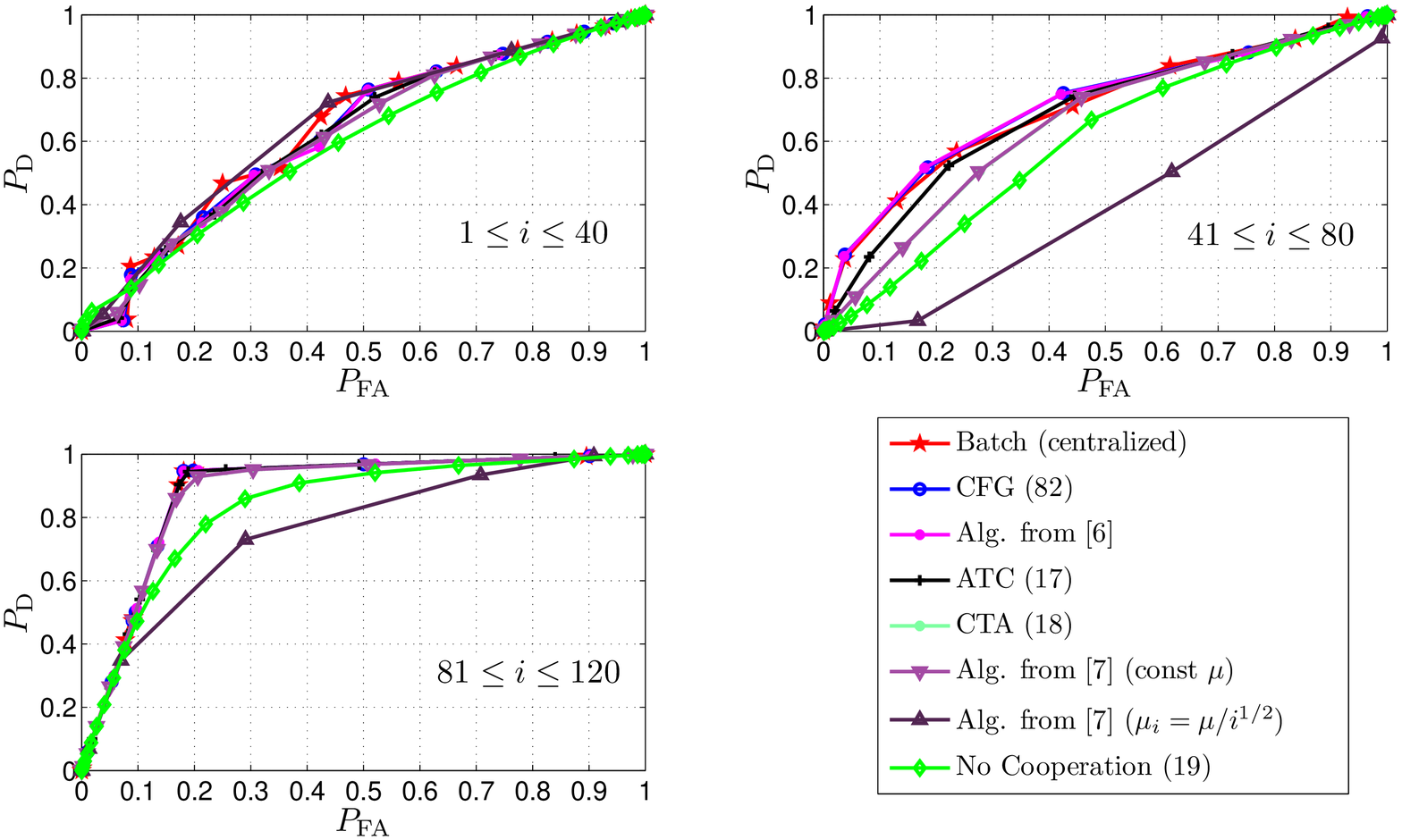}
	    		\label{fig:ROC_STAGGER}
	    }
	    \caption{Results from \texttt{STAGGER} simulation.}
	    \label{fig:STAGGER}
    \end{figure*}
\section{General Discussion}
\label{sec:general_discussion}
We saw in Sec.~\ref{sec:assumptions} that the excess-risk of a classifier can be written as a weighted mean-square-error with a weight matrix chosen according to Table \ref{tbl:metrics} when the step-size $\mu$ is small. This formulation of the excess-risk allows us to study the performance of distributed algorithms and explain their behavior. When the environment is stationary (for example, when the learners are sampling from a fixed distribution), we saw that the ATC and CTA diffusion algorithms can achieve an excess-risk performance proportional to $\mu$. In addition, we established that the ATC algorithm will outperform the CTA algorithm and non-cooperative processing when the combination matrix $A$ is doubly-stochastic. This generalizes previous results that only applied when the loss function used in the learning process is quadratic \cite{book_chapter}.  

When the environment is non-stationary, we modeled the optimizer $\w^o$ to be a random walk with i.i.d. increments. This model allows us to study the performance of the diffusion algorithm when tracking a non-stationary random process. We obtained (in Theorem \ref{thm:tracking}) a bound on the excess-risk that is comprised of three terms: a constant term that depends on the covariance matrix of the increments of the random walk process, a term that is proportional to $\mu$, and a term that is inversely proportional to $\mu$. This result is intuitive since we expect the diffusion algorithm to be able to track a fixed optimizer, or a relatively slow optimizer. As the optimizer evolves more quickly, however, the algorithm must increase the step-size in order to become more agile. The trade-off for the tracking ability of the diffusion algorithm is summarized in Fig.~\ref{fig:tracking_tradeoff}.

The simulation results illustrated that the steady-state excess-risk performance of the diffusion algorithm is proportional to the step-size $\mu$ (see Fig.~\ref{fig:ER_webspam_large_mu}-\ref{fig:ER_webspam_small_mu}). Furthermore, we showed through extensive simulations that the ATC algorithm outperforms the consensus-based algorithm proposed in \cite{yan} when constant step-sizes are employed. It can be observed from Fig.~\ref{fig:Stationary_ROC} that the area under the ROC curve of the ATC algorithm is larger than that of the non-cooperative, consensus-based, and CTA algorithms. Furthermore, the performance of the ATC algorithm is seen to approach that of batch processing, especially for small step-sizes. In Fig.~\ref{fig:ER_markov}, we see that a constant step-size algorithm can track a changing optimizer, unlike a diminishing step-size algorithm such as the one described in \cite{yan}. 

\section{Conclusion}
We analyzed the generalization ability of distributed online learning algorithms by showing that constant step-size algorithms can have bounded network excess-risk in non-stationary environments. We provided closed-form expressions for the asymptotic excess-risk and showed the advantage of cooperation over  networks.

\section{Acknowledgments}
\noindent Partial support for this project was received from the National Science Foundation grants CCF-1011918 and  CCF-0942936.

\appendix

\section{Comparing Diffusion and Non-Cooperative\\ Strategies}
\label{app:app4}
\subsection{CTA vs. Non-Cooperative Processing}
We confine our discussion to the following diffusion models
\begin{align}
	C=I_N,\quad A_1 = A,\quad A_2 = I_N\qquad\mathrm{(CTA)} \label{eq:CTA_cond1}\\
	C=I_N,\quad A_1 = I_N,\quad A_2 = A\qquad\mathrm{(ATC)} \label{eq:ATC_cond1}
\end{align}
The case of non-cooperating nodes corresponds to the choices:
\begin{align}
	C=I_N, \quad A_1 = I_N,\quad A_2 = I_N\quad\!\mathrm{({non-cooperative}\ processing)}
	\label{eq:ind_processing}
\end{align}
Our objective is to compare the network excess-risk achieved by the diffusion strategies and the excess-risk achieved when there is no cooperation between the nodes. We will conduct the analysis for constant step-sizes in stationary environments. To begin, we start from \eqref{eq:ER_SS_approx} and rewrite it as:
\begin{align}
	\E\|\tilde{\w}_{i-1}\|^2_\mathcal{T} \approx \mathrm{vec}(\mathcal{Y})^\T (I-\F)^{-1} \mathrm{vec}(\mathcal{T})
\end{align}
where
\begin{align}
	\mathcal{Y} \triangleq \mu^2 \R_v
\end{align}
We now perform the series expansion of $(I-\F)^{-1}$ to get
\begin{align}
	\E\|\tilde{\w}_{i-1}\|^2_\mathcal{T}  &\approx \mathrm{vec}(\mathcal{Y})^\T \sum_{j=0}^\infty \F^{j} \mathrm{vec}(\mathcal{T})\nonumber\\
							&= \mathrm{vec}(\mathcal{T})^\T \sum_{j=0}^\infty (\F^{j})^\T \mathrm{vec}(\mathcal{Y})\nonumber\\
							&= \mathrm{vec}(\mathcal{T})^\T \sum_{j=0}^\infty (\B^j\otimes \B^j) \mathrm{vec}(\mathcal{Y})\nonumber\\
							&= \sum_{j=0}^\infty \mathrm{vec}(\mathcal{T})^\T\mathrm{vec}(\B^j \mathcal{Y} (\B^j)^\T)\nonumber\\
							&= \sum_{j=0}^\infty \Tr(\mathcal{T}^\T \B^j \mathcal{Y} (\B^j)^\T) \label{eq:WMSE_general}
\end{align}
When Assumption \ref{ass:common_cost} holds, we have the weighting matrix $\mathcal{T}$ has the form $\mathcal{T} = I_N \otimes S$ where $S \triangleq \frac{1}{2N} \nabla^2 J(w^o)$. We can then simplify the above as:
\begin{align}
	\E\|\tilde{\w}_{i-1}\|^2_\mathcal{T}  &\approx \sum_{j=0}^\infty \Tr((I_N \otimes S) \B^j \mathcal{Y} (\B^j)^\T)
\end{align}
In addition, with Assumption \ref{ass:common_cost}, we have $\mathcal{D} = I_N \otimes D^o$ for some $M\times M$ matrix $D^o$ that is the same for all nodes, then we can further write:
\begin{align}
	\B = A_1^\T \otimes (I_M - \mu D^o)
\end{align}
We define the excess-risk for CTA and non-cooperative processing as:
\begin{align}
	\mathrm{ER}_{\mathrm{ind}} &\triangleq \mu^2 \sum_{j=0}^\infty \Tr((I_N \otimes S) \B_{\mathrm{ind}}^j \mathcal{Y} \B_{\mathrm{ind}}^{\T j})\\
	\mathrm{ER}_{\mathrm{CTA}} &\triangleq \mu^2 \sum_{j=0}^\infty \Tr((I_N \otimes S) \B_{\mathrm{CTA}}^j \mathcal{Y} \B_{\mathrm{CTA}}^{\T j})
\end{align}
where $\B_{\mathrm{CTA}}$ and $\B_{\mathrm{ind}}$ are defined as:
\begin{align}
	\B_{\mathrm{ind}} \triangleq I_N \otimes (I_M - \mu D^o) \label{eq:B_ind}\\
	\B_{\mathrm{CTA}} \triangleq A \otimes (I_M - \mu D^o) \label{eq:B_CTA}
\end{align}
Noticing that $\mathcal{Y}$ is the same for CTA and the individual processing case, we compute the difference in the excess-risk as:
\begin{align}
	&\mathrm{ER}_{\mathrm{ind}} - \mathrm{ER}_{\mathrm{CTA}} = \nonumber\\
&\mu^2 \sum_{j=0}^\infty \Tr((\B_{\mathrm{ind}} (I_N \otimes S) \B_{\mathrm{ind}}^\T - \B_{\mathrm{CTA}} (I_N \otimes S) \B_{\mathrm{CTA}}^\T)\mathcal{Y})
	\label{eq:WMSE1}
\end{align}
We substitute \eqref{eq:B_ind}-\eqref{eq:B_CTA} into \eqref{eq:WMSE1}, and get:
\begin{align}
	&\mathrm{ER}_{\mathrm{ind}} - \mathrm{ER}_{\mathrm{CTA}} = \nonumber\\
	&\mu^2 \sum_{j=0}^\infty \Tr(((I_N - A^jA^{j\T})\otimes ((I_M - \mu D^o)^j S (I_M - \mu D^o)^j))\mathcal{Y})
\end{align}
Since $S \triangleq \frac{1}{2N} \nabla^2 J(w^o)$ is positive-definite, we conclude that $(I_M - \mu D^o)^j S (I_M - \mu D^o)^j \geq 0$. Finally, since we assumed that $A$ is doubly-stochastic, then $A^j$ is also doubly-stochastic, as well as $A^j A^{j\T}$. Therefore, the matrix $(I-A^j A^{j\T}) \geq 0$ and its eigenvalues are in the range $[0,1]$ \cite{book_chapter}. Finally, combining these facts with the knowledge that $\mathcal{Y} \geq 0$, we conclude that:
\begin{align}
\boxed{
	\mathrm{ER}_{\mathrm{CTA}} \leq \mathrm{ER}_{\mathrm{ind}}
	}
\ \text{(small $\mu$, large $i$, $C=I_N$, $\mathds{1}^\T A = \mathds{1}^T$, $A \mathds{1} = \mathds{1}$)}
\end{align}
A similar conclusion holds for ATC. Actually, ATC outperforms CTA as well, as we show next. 

\subsection{ATC vs. CTA}
In order to compare ATC to CTA, we continue our assumption that the matrix $A$ is doubly stochastic, but we generalize our model for CTA and ATC from \eqref{eq:CTA_cond1} and \eqref{eq:ATC_cond1} to:
\begin{align}
	C,\quad A_1 = A,\quad A_2 = I_N\qquad\mathrm{(CTA)} \label{eq:CTA_cond2}\\
	C,\quad A_1 = I_N,\quad A_2 = A\qquad\mathrm{(ATC)} \label{eq:ATC_cond2}
\end{align}
where we have modified the model to allow for an arbitrary right-stochastic matrix $C$. We continue from \eqref{eq:WMSE_general} and rewrite the network excess-risk at steady-state for both CTA and ATC as:
\begin{align}
	\mathrm{ER}_{\mathrm{CTA}} &= \sum_{j=0}^\infty \Tr(\mathcal{T}^\T \B_{\mathrm{CTA}}^j \mathcal{Y}_{\mathrm{CTA}} (\B_{\mathrm{CTA}}^j)^\T) \label{eq:WMSE_CTA1}\\
	\mathrm{ER}_{\mathrm{ATC}} &= \sum_{j=0}^\infty \Tr(\mathcal{T}^\T \B_{\mathrm{ATC}}^j \mathcal{Y}_{\mathrm{ATC}} (\B_{\mathrm{ATC}}^j)^\T) \label{eq:WMSE_ATC1}
\end{align}
where
\begin{align}
	\B_{\mathrm{CTA}} &\triangleq [ I_{M N} - \mu \mathcal{D} ] \mathcal{A}^\T \label{eq:B_CTA2}\\
	\B_{\mathrm{ATC}} &\triangleq \mathcal{A}^\T [ I_{M N} - \mu \mathcal{D} ] \label{eq:B_ATC2}\\
	\mathcal{Y}_{\mathrm{CTA}} &\triangleq \mu^2 \R_v\\
	\mathcal{Y}_{\mathrm{ATC}} &\triangleq \mu^2 \mathcal{A}^\T \R_v \mathcal{A}
\end{align}
Like the previous section, we assume the same risk function for all nodes (i.e., Assumption \ref{ass:common_cost} holds) so that $\mathcal{D} = I_N \otimes D^o$ and that the weighting matrix $\mathcal{T}$ has the form $\mathcal{T}=I_N \otimes S$ where $S \triangleq \frac{1}{2N} \nabla^2 J(w^o)$. With the first assumption, we have:
\begin{align}
	\B_{\mathrm{CTA}} = \B_{\mathrm{ATC}} = A^T \otimes (I_M - \mu D^o)
\end{align}
We compute the difference between the excess-risks:
\begin{align*}
	&\mathrm{ER}_{\mathrm{CTA}} \!\!-\!\! \mathrm{ER}_{\mathrm{ATC}} \nonumber\\
	&= \sum_{j=0}^\infty \!\Tr\left( \left(A^j (I\!-\!AA^{\T}) A^{j\T}\right)\!\! \otimes\!\! \left((I_M\!-\!\mu D^o)^j S (I_M \!-\! \mu D^o)^j\right)\!\mu^2 \R_v\right)
\end{align*}
We can verify that the above difference is non-negative by noting that $\R_v > 0$ and $(I_M-\mu D^o)^j S (I_M - \mu D^o)^j$ is positive-semi-definite. Moreover, $A^j (I-AA^{\T}) A^{j\T} \geq 0$ \cite{book_chapter}. Therefore, we have established, under our assumptions, that
\begin{align}
	\mathrm{ER}_{\mathrm{ATC}} \leq \mathrm{ER}_{\mathrm{CTA}}
\end{align}
Therefore, combining this result with the result from the previous appendix we conclude that for small $\mu$, large $i$, $C=I_N$, $\mathds{1}^\T A = \mathds{1}^\T$, and $A \mathds{1} = \mathds{1}$
\begin{align}
\boxed{
	\mathrm{ER}_{\mathrm{ATC}} \leq \mathrm{ER}_{\mathrm{CTA}} \leq \mathrm{ER}_{\mathrm{ind}}
}
\end{align}

\section{Mean-Square-Error Analysis}
\label{app:proof_convergence}
We follow the approach of \cite{Jianshu_common_wo} and extend it to handle non-stationary environments as well. We define the error vectors at node $k$ at time $i$ as:
\begin{align}
	\tilde{\bphi}_{k,i} &\triangleq \w^o_{i} - \bphi_{k,i} \label{eq:bphi_ki}\\
	\tilde{\bpsi}_{k,i} &\triangleq \w^o_i - \bpsi_{k,i}\\
	\tilde{\w}_{k,i}^f    &\triangleq \w^o_i -\w_{k,i} \label{eq:bw_ki}
\end{align}
We subtract \eqref{eq:C1} from $\w^o_{i-1}$ and \eqref{eq:A}-\eqref{eq:C2} from $\w^o_i$ using \eqref{eq:grad_model} to get
\begin{align}
	\tilde{\bphi}_{k,i-1} &= \sum_{\ell=1}^N a_{1,\ell k} \tilde{\w}_{\ell,i-1}^f \label{eq:C1_err}\\
	\tilde{\bpsi}_{k,i} &= \tilde{\bphi}_{k,i-1} + \q_i + \mu \sum_{\ell = 1}^N c_{\ell k} \left[\nabla J_{\ell,i-1}(\bphi_{k,i-1}) + \v_\ell(\bphi_{k,i-1})\right] \label{eq:A_err}\\
	\tilde{\w}_{k,i}^f   &= \sum_{\ell=1}^N a_{2,\ell k} \tilde{\bpsi}_{\ell,i} \label{eq:C2_err}
\end{align}
Using the mean-value-theorem for real vectors \eqref{eq:polyak1}, we can express the gradient $\nabla J_{k,i-1}(\bphi_{k,i-1})$ in terms of $\tilde{\bphi}_{k,i-1}$:
\begin{align}
	\nabla J_{\ell,i-1}(\bphi_{k,i-1})\! &=\!\!\nabla \!J_{\ell,i-1}(\w^o_{i-1})\! -\! \left[\int_0^1 \!\!\!\!\!\nabla^2 J_{\ell,i-1}(\w^o_{i-1}\! -\! t \tilde{\bphi}_{k,i-1})dt\right]   \tilde{\bphi}_{k,i-1} \nonumber\\
								 &\triangleq -\H_{\ell,k,i}\tilde{\bphi}_{k,i-1}
	\label{eq:gradient_expansion}
\end{align}
where we are defining
\begin{align}
	\H_{\ell,k,i} \triangleq \int_0^1 \nabla^2 J_{\ell,i-1}(\w^o_{i-1}-t \tilde{\bphi}_{k,i-1})dt
\end{align}
Notice that $\nabla J_{\ell,i-1} (\w^o_{i-1}) = 0$ since the minimizer at time $i-1$ is $\w^o_{i-1}$. Substituting \eqref{eq:gradient_expansion} into \eqref{eq:A_err}, we get
\begin{align}
	\tilde{\bpsi}_{k,i} &= \left[I-\mu \sum_{\ell = 1}^N c_{\ell k} \H_{\ell,k,i-1}\right] \tilde{\bphi}_{k,i-1} + \mu \sum_{\ell = 1}^N c_{\ell k} \v_\ell(\bphi_{k,i-1}) + \q_{i}
	\label{eq:A2_err}
\end{align}

\subsection{Local MSE Recursions}
We now derive the mean-square-error (MSE) recursions by noting that the squared norm $\|x\|^2 \triangleq x^\T x$ is a convex function of $x$. Therefore, applying Jensen's inequality \cite[p.77]{cvx_book} to \eqref{eq:bphi_ki} and \eqref{eq:bw_ki}  we get:
\begin{align}
	\mathbb{E} \|\tilde{\bphi}_{k,i-1}\|^2 &\leq \sum_{\ell=1}^N a_{1,\ell k} \mathbb{E}\|\tilde{\w}_{\ell,i-1}^f\|^2,\quad k=1,\ldots,N \label{eq:C1_var_ineq}\\
	\mathbb{E} \|\tilde{\w}_{k,i}^f\|^2 &\leq \sum_{\ell=1}^N a_{2,\ell k} \mathbb{E}\|\tilde{\bpsi}_{\ell,i}\|^2,\quad\quad k=1,\ldots,N \label{eq:C2_var_ineq}
\end{align}
From \eqref{eq:A2_err} and using Assumption \ref{ass:noiseModeling}, we obtain
\begin{align}
	\mathbb{E} \|\tilde{\bpsi}_{k,i}\|^2 &= \mathbb{E} \|\tilde{\bphi}_{k,i-1}\|^2_{\bSigma_{k,i}}  + \mathbb{E} \|\q_{i}\|^2 + \mu^2 \E\left\Vert\sum_{\ell = 1}^N c_{\ell k} \v_\ell(\bphi_{k,i-1})\right\Vert^2
	\label{eq:A_var_eq}
\end{align}
where we are introducing the weighting matrix:
\begin{align}
	\bSigma_{k,i} &\triangleq \left(I_M - \mu \sum_{\ell = 1}^N c_{\ell k} \H_{\ell,k,i}\right)^2
\end{align}
The matrices $\bSigma_{k,i}$ are positive semi-definite and bounded by:
\begin{align}
	0 \leq \bSigma_{k,i} \leq \gamma_{k}^2 I_M
	\label{eq:bSigma_bound}
\end{align}
where
\begin{align}
	\gamma_{k} \triangleq \max\left\{\left|1-\mu \lambda_{\max}\sum_{\ell = 1}^N c_{\ell k}\right|,\left|1-\mu \lambda_{\min}\sum_{\ell = 1}^N c_{\ell k}\right|\right\}
	\label{eq:gamma}
\end{align}
Now note that the square of $\gamma_{k}$ from \eqref{eq:gamma} can be upper-bounded by:
\begin{align}
	\gamma_{k}^2 &= \max\left\{1-2 \mu \lambda_{\max} \sum_{\ell = 1}^N c_{\ell k} + \mu^2 \lambda_{\max}^2 \left( \sum_{\ell = 1}^N c_{\ell k}\right)^2,\right. \nonumber\\
				   &\quad\quad\quad\quad\left.1- 2\mu \lambda_{\min} \sum_{\ell = 1}^N c_{\ell k} + \mu^2 \lambda_{\min}^2 \left(\sum_{\ell = 1}^N c_{\ell k}\right)^2\right\} \nonumber\\
				   &\leq 1-2 \mu \lambda_{\min} C_* + \mu^2 \lambda_{\max}^2 \|C\|_1^2
\end{align}
where $C_*$ denotes the minimum absolute column sum of the matrix $C$. In order to simplify the notation in the following analysis, we introduce the upper-bound
\begin{align}
	\beta \triangleq 1-2 \mu \lambda_{\min} C_* + \mu^2 \lambda'
\end{align}
where
\begin{align}
	\lambda' \triangleq (\lambda_{\max}^2 + \alpha) \|C\|_1^2
\end{align}
and $\alpha$ is defined in Assumption \ref{ass:noiseModeling}. Also, note that by Lemma 3 from \cite{Jianshu_common_wo}, we have:
\begin{align}
	\E\left\Vert\sum_{\ell = 1}^N c_{\ell k} \v_\ell(\bphi_{k,i-1})\right\Vert^2 \leq \|C\|_1^2 \left[\alpha \E\|\tilde{\bphi}_{k,i-1}\|^2+\sigma_v^2\right]
	\label{eq:noise_var_bound}
\end{align}
Combining \eqref{eq:bSigma_bound}, \eqref{eq:noise_var_bound}, and \eqref{eq:A_var_eq}, we obtain for all $k=1,\ldots,N$:
\begin{align}
	\mathbb{E} \|\tilde{\bpsi}_{k,i}\|^2 &\leq \beta\, \E\|\tilde{\bphi}_{k,i-1}\|^2 + \mu^2 \|C\|_1^2\sigma_v^2 + \Tr(Q)
	\label{eq:A_var_ineq}
\end{align}

\subsection{Network MSE Recursions}
We now combine the MSE values at each node into network MSE vectors as follows:
\begin{align}
	\mathcal{W}_i &\triangleq \big[\E\|\tilde{\w}_{1,i}^f\|^2,\dots,\E\|\tilde{\w}_{N,i}^f\|^2\big]^\T\\
	\mathcal{X}_i &\triangleq \big[\E\|\tilde{\bphi}_{1,i}\|^2,\dots,\E\|\tilde{\bphi}_{N,i}\|^2\big]^\T\\
	\mathcal{Y}_i &\triangleq \big[\E\|\tilde{\bpsi}_{1,i}\|^2,\dots,\E\|\tilde{\bpsi}_{N,i}\|^2\big]^\T
\end{align}
We can then rewrite \eqref{eq:C1_var_ineq}, \eqref{eq:A_var_ineq}, and \eqref{eq:C2_var_ineq} as:
\begin{align}
	\mathcal{X}_{i-1} &\preceq A_1^\T \mathcal{W}_{i-1}\\
	\mathcal{Y}_i &\preceq \beta \mathcal{X}_{i-1} + (\mu^2 \|C\|_1^2 \sigma_v^2 + \Tr(Q))\mathds{1}_N\\
	\mathcal{W}_i &\preceq A_2^\T \mathcal{Y}_{i}
\end{align}
where $x \preceq y$ indicates that each element of the vector $x$ is less than or equal to the correspondent element of vector $y$. Moreover, the notation $\mathds{1}_N$ denotes the vector with all entries equal to one. Using the fact that if $x \preceq y$ then $B x \preceq B y$ for any matrix $B$ with non-negative entries, we can combine the above inequality recursions into a single recursion for $\mathcal{W}_i$ and get:
\begin{align}
	\mathcal{W}_i &\preceq \beta A_2^\T A_1^\T \mathcal{W}_{i-1} + ( \mu^2 \|C\|_1^2 \sigma_v^2 + \Tr(Q)) \mathds{1}_N
				  \label{eq:bigW_recur_1}
\end{align}
We now upper-bound the $\infty$-norm (maximum absolute value) of the vector $\mathcal{W}_i$ in order to obtain the scalar-recursion:
\begin{align*}
	\|\mathcal{W}_i\|_\infty &\leq \|\beta A_2^\T A_1^\T \mathcal{W}_{i-1}\|_\infty + \mu^2 \|C\|_1^2 \sigma_v^2 +  \Tr(Q)\nonumber\\
							 &\leq \beta \cdot \|A_2^\T\|_\infty \cdot \|A_1^\T\|_\infty \cdot \|\mathcal{W}_{i-1}\|_\infty + \mu^2 \|C\|_1^2 \sigma_v^2 +  \Tr(Q)
\end{align*}
where $\|A\|_\infty$ denotes the maximum absolute row sum of matrix $A$. Noting that the matrices $A_1$ and $A_2$ are left-stochastic, we have that $\|A_1^\T\|_\infty = 1$ and $\|A_1^\T\|_\infty = 1$. Therefore,
\begin{align}
	\|\mathcal{W}_i\|_\infty &\leq \beta \|\mathcal{W}_{i-1}\|_\infty + \|C\|_1^2 \sigma_v^2 \mu^2 + \Tr(Q)
	\label{eq:scalar_recursion}
\end{align}
Unrolling \eqref{eq:scalar_recursion}, we get
\begin{equation}
\boxed{
	\|\mathcal{W}_i\|_\infty \leq \beta^i \|\mathcal{W}_{0}\|_\infty +
							 \left(\|C\|_1^2 \sigma_v^2 \mu^2 + \Tr(Q)\right) \sum_{j=0}^{i-1} \beta^j
	}
	\label{eq:intermediate_rec_ineq}
\end{equation}

\bibliographystyle{elsarticle-num}
\bibliography{refs_Neurocomputing}



\end{document}